\newtheorem{theorem}{Theorem}[section]
\newtheorem{lemma}[theorem]{Lemma}
\newtheorem{proposition}[theorem]{Proposition}
\theoremstyle{definition}
\newtheorem{example}[theorem]{Example}
\newtheorem{problem}{Problem}
\newtheorem{definition}[theorem]{Definition}
\def\Z{\mathbb{Z}}
\DeclareMathOperator{\Aut}{Aut}
\DeclareMathOperator{\Ker}{Ker}
\DeclareMathOperator{\Skew}{Skew}
\DeclareMathOperator{\Ext}{Ext}
\DeclareMathOperator{\Core}{Core}
\DeclareMathOperator{\av}{av}
\DeclareMathOperator{\Av}{Av}
\title[Cyclic complementary  extensions and skew-morphisms]
{Cyclic complementary  extensions and skew-morphisms}
\author[K.~Hu\and R. Jajcay ]
{Kan Hu\and Robert Jajcay }
\address{K. Hu,
\newline\indent
Department of Mathematics, Zhejiang Ocean University, Zhoushan, Zhejiang 316022, P.R. China
\newline\indent UP FAMNIT, University of Primorska, Glagolja\v ska 8,
6000 Koper, Slovenia
\newline\indent UP IAM, University of Primorska, Muzejski trg 2,
6000 Koper, Slovenia}
\email{hukan@zjou.edu.cn}
\address{R. Jajcay
\newline\indent
Department of Algebra and Geometry, Faculty of Mathematics, Physics and Computer Science, Comenius University, Bratislava, Slovakia}
\email{robert.jajcay@fmph.uniba.sk}
\thanks{}
\keywords{group extension, complementary factorization, skew-morphism}
\subjclass[2010]{05C10, 05C25, 57M15}
\begin{document}
\maketitle

\begin{abstract}
A cyclic complementary extension of a finite group $A$  is a finite group $G$ which contains $A$ and a cyclic subgroup $C$ such that $A\cap C=\{1_G\}$ and $G=AC$.
For any fixed generator $c$ of the cyclic factor  $C=\langle c\rangle$ of order $n$ in a 
cyclic complementary extension $G=AC$,
the equations $cx=\varphi(x)c^{\Pi(x)}$, $x\in A$, determine a permutation $\varphi:A\to A$ 
 and a function $\Pi:A\to\mathbb{Z}_n$ on $A$ characterized by the properties:
 (a) $\varphi(1_A)=1_A$ and $\Pi(1_A)\equiv1\pmod{n}$; (b) $\varphi(xy)=\varphi(x)\varphi^{\Pi(x)}(y)$
 and $\Pi(xy)\equiv\sum_{i=1}^{\Pi(x)}\Pi(\varphi^{i-1}(y))\pmod{n}$, for all $x,y\in A$. 
The permutation $\varphi$ is called a skew-morphism of $A$ and has already been extensively studied. One of the main contributions of the present paper is the recognition
of the importance of the function $\Pi$, which we call the extended power function associated with $\varphi$. We show that {\em every} cyclic complementary extension of $A$ is determined and can be constructed from a
skew-morphism $\varphi$ of $A$ and an extended power function $\Pi$ associated with $\varphi$.
As an application, we present a classification of cyclic complementary extensions of cyclic groups
obtained using skew-morphisms which are group automorphisms.
\end{abstract}
\section{Introduction}\label{sec:intro}

A group $G$ is defined to be an \textit{exact product} of two subgroups $A$ and $B$ if $A\cap B=\langle 1_G \rangle $ and $G=AB$.  A classical theorem by It\^o establishes that an exact product of two abelian groups is metabelian~\cite{Ito1955}. Additionally, it follows from a theorem by Wielandt and Kegal~\cite{Wielandt1958,Kegel1961} that an exact product of two nilpotent groups is solvable. Exact products have been subject to intensive research, particularly in connection with primitive permutation groups containing a regular subgroup, and finite simple groups~\cite{LWX2023,LPS1996,LPS2010}.

A  {\em  cyclic complementary extension} of a finite group $A$ is a finite group $G =AC$ which
is an exact product of the subgroup $A$ and a cyclic subgroup $C$. Thus, the notion of a cyclic complementary extension  can be regarded as a specific instance of an exact product of two subgroups, one of which is cyclic.

Cyclic complementary extensions of cyclic groups, which constitute a special case of the more
general concept of {\em bicyclic groups}, i.e., products of two cyclic subgroups, 
have been intensely investigated by group theorists. A theorem of Douglas states that every 
bicyclic group is supersolvable~\cite{Douglas1961}. Huppert's theorem asserts that, for any odd
prime $p$, every bicyclic $p$-group is metacyclic~\cite{Huppert1953}. 
More recently, Janko contributed to this body of knowledge by providing a characterization of non-metacyclic 
bicyclic $2$-groups. His work revealed that a non-metacyclic $2$-group is bicyclic if and only if it has 
a rank $2$ and contains precisely one non-metacyclic maximal subgroup~\cite{Janko2008}.

In this paper, we study cyclic complementary  extensions using the language of skew-morphisms which are
special permutations of elements of a group and have been shown to play
a crucial role in the context of regular Cayley maps; see for example~\cite{BCV2022, BJ2017,CDL2022,  CJT2016,CT2014,DYL2022,HKK2022,JS2002,KK2021,KN2017}.

Given a (finite) group $A$, a {\em skew-morphism\/} of $A$ is a permutation of elements
of $A$, $\varphi: A \to A$, that fixes the identity $ 1_A $ of $A$ and admits the existence of an 
{\em associated power function} $\pi\!:A \to \Z$ such that 
$$ \varphi(xy) = \varphi(x)\varphi^{\pi(x)}(y) \ \hbox{ for all }x,y \in A.$$  
The {\em order} $m$ of $ \varphi $ is the usual order of $ \varphi $ as a permutation under 
composition, and the power function $\pi$ can be regarded as a function from $A$ to $\Z_m$. 
A skew-morphism $\varphi$ is an automorphism of $A$ if and only if, the value of the power function 
$\pi$ is congruent modulo $m$ to $1$ for all $x \in A$. In case of skew-morphisms which are not 
automorphisms, the {\em kernel\/} of $\varphi$ is defined as the subset $\{ x \in A \mid \pi(x) \equiv1\pmod{m} \}$, 
and is denoted by $\Ker\pi$.  The kernel $ \Ker\pi$ is known to be a subgroup of $A$, and 
the values of $\pi$ on two elements of $ x,y \in A$ coincide if and only if $x$ and $y$ 
lie in the same right coset of $ \Ker\pi$.

The connection between skew-morphisms and cyclic complementary extensions 
 stems from the following observations. If $G=A\langle c\rangle$ is a cyclic complementary extension of 
 a subgroup $A$  by a cyclic subgroup group $\langle c\rangle$ of order $n$, then $ A \langle c \rangle=G  = \langle c\rangle A $,   so for each $x \in A $ there exist  
$ x' \in A $ and $ i \in {\Bbb Z}_n $ such that $ cx = x'c^i $, 
both of which are uniquely determined by $x$. 
This uniqueness allows us to define functions $\varphi \! : A \to A$ and $\Pi \!: A \to {\Bbb Z}_n$ 
by setting 
\begin{equation}\label{skew-def}
\varphi(x) = x' \ \hbox{ and } \ \Pi(x) = i \ \hbox{ whenever} \ cx = x'c^i, \ \hbox{ where } 
  x' \in A \hbox{ and } i \in {\Bbb Z}_n.
\end{equation}
It is well-known~\cite{CJT2016} that the mapping $\varphi $ defined this way is a skew-morphism   
of $A$ of order $m:=|\varphi|$ equal to the index $|\langle c\rangle:\langle c\rangle_G|$, where $\langle c\rangle_G$ 
denotes the core of $\langle c\rangle$ in $G$. The associated power function $\pi:A\to\mathbb{Z}_{m}$ of $\varphi$ is determined by 
\begin{equation}\label{PF}
\pi(x)\equiv\Pi(x)\pmod{m},\quad\text{for all $x\in A$}.
\end{equation}
In the forthcoming, we show that the function $\Pi:A\to\mathbb{Z}_n$ defined by the equations
\eqref{skew-def} has the following defining properties (see Section 3): 
\begin{enumerate}[\rm(a)]
\item $\Pi(1)\equiv1\pmod{n}$,
 \item $\Pi(xy)\equiv\sum_{i=1}^{\Pi(x)}\Pi(\varphi^{i-1}(y))\pmod{n}$ for all $x,y\in A$.
 \end{enumerate}
Using the function $\Pi:A\to\mathbb{Z}_n$, the left multiplication of the elements
of $A$ by $c$ can now be reformulated to  
\begin{equation}\label{Comm}
cx = \varphi(x)c^{\Pi(x)},\quad\text{ for all $x \in A$}.
\end{equation}
Thus, every cyclic complementary  extension of $A$ yields a skew-morphism of $A$. This 
skew-morphism is not necessarily unique, as 
different choices of a generator for $ \langle c \rangle $ usually yield different skew-morphisms.

To take the opposite point of view on this correspondence, assume that $ \varphi : A \to
A $ is a skew-morphism of $A$ of order $m$ with the associated power function $ \pi :
A \to {\Bbb Z}_m $. The {\em skew product} $A \langle \varphi \rangle $ of $A$ with $ \varphi $ 
of order $m$ is defined in~\cite{CJT2016} 
to be the group whose elements are pairs of elements in $ A \times \langle \varphi \rangle $ under the multiplication
\begin{eqnarray}\label{skew-product} 
x \varphi^i \cdot y \varphi^j = x \varphi^i(y) \varphi^{\sum_{t=1}^i\pi(\varphi^{t-1}(y))+j} , 
\quad\mbox{ for all } x,y \in A\quad\text{and}\quad i,j \in {\Bbb Z}_m .
\end{eqnarray}
Clearly, each skew product of a group $A$ with one of its skew-morphisms is a 
cyclic complementary  extension of $A$ of order $ |A| \cdot m $; where $m$ is the order
of $ \varphi $. This might be viewed as
the reversed correspondence between cyclic complementary  
extensions of a group and its skew-morphisms. 

There is, however, a gap in this picture, namely, not every cyclic complementary  
extension of $A$ is a skew product of $A$ with one of its skew-morphisms. 
Indeed, since the cyclic factor $\langle\varphi\rangle$
is core-free in the skew product $A\langle\varphi\rangle$, a cyclic
complementary extension  $G=A\langle c\rangle$  of $A$ is a skew product of  skew-morphism $ \varphi $ 
of $A$ if and only if the cyclic complement $\langle c\rangle$ of $A$ 
is core-free in $G$, or equivalently,  the order of $ \varphi $ 
 matches the order of  $c$. Another notable fact, as shown in \cite[Theorem 4.2]{CJT2016}, 
is that $|\varphi|<|A|$ whenever $|A| \neq 1$. Hence, no cyclic complementary  
extension $G=A\langle c\rangle$ in which $|c| \geq |A| > 1$
is a skew-product of $A$.

One of the aims of this paper is to fill this gap by  determining all cyclic complementary 
extensions $G = A \langle c \rangle $ of $A$ which correspond to a prescribed 
 skew-morphism $ \varphi $. More precisely, we consider the following problem:
 \begin{problem}\label{main-pro}
 For a given a skew-morphism $\varphi$ of a finite group $A$, characterize all cyclic complementary
 extensions $G=A\langle c\rangle$ of $A$ such that the skew-morphism determined by \eqref{skew-def}
 is the prescribed skew-morphism $\varphi$.
 \end{problem}
To solve Problem~\ref{main-pro}, we  first generalize the concept of a power function $\pi:A\to\mathbb{Z}_m$
of a skew-morphism $\varphi$ to the  concept of an extended power function of $\varphi$,
which is a function $\Pi:A\to\mathbb{Z}_n$ defined for a positive multiple $n$ of  $m=|\varphi|$
having the following properties: 
\begin{enumerate}[\rm(a)]
\item $\Pi(x)\equiv\pi(x)\pmod{m}$ for all $x\in A$,
\item $\Pi(1)\equiv1\pmod{n}$,
 \item $\Pi(xy)\equiv\sum_{i=1}^{\Pi(x)}\Pi(\varphi^{i-1}(y))\pmod{n}$ for all $x,y\in A$.
 \end{enumerate}
 The first main result of this paper is the following correspondence theorem; see Section~\ref{sec:esp} 
 for its proof.
 \begin{theorem}\label{main-thm1}
 If $G=A\langle c\rangle$ is a cyclic complementary extension of a group $A$ by a cyclic group $\langle c\rangle$ 
 of order $n$, then the equation $cx=\varphi(x)c^{\Pi(x)}$,
 $x\in A$, determines a pair $(\varphi,\Pi)$ of a skew-morphism $\varphi$ of $A$ and an extended power function 
 $\Pi:A\to\mathbb{Z}_n$ of $\varphi$.
 
Conversely, every pair $(\varphi,\Pi)$ of a skew-morphism $\varphi$ of a group $A$ and 
an extended power function $\Pi:A\to\mathbb{Z}_n$ of $\varphi$ determines
 a cyclic complementary extension $A\langle c\rangle$ of $A$ by a cyclic group $\langle c\rangle$ of order $n$
such that the skew-morphism determined by the equation $cx=\varphi(x)c^{\Pi(x)}$,
 $x\in A$,  is $\varphi$.

Moreover, suppose that $(\varphi_i,\Pi_i)$ are two such pairs,
and $A\langle c_i\rangle$  are the corresponding cyclic complementary extensions of $A$, $i=1,2$.
Then  there exists an isomorphism $\Theta:A\langle c_1\rangle\to A\langle c_2\rangle$ such that $\Theta(A)=A$ and 
 $\Theta(c_1)=c_2$ if and only if there is an automorphism $\theta$ of $A$ such that $\varphi_2=\theta\varphi_1\theta^{-1}$ and  $\Pi_2=\Pi_1\theta^{-1}$.
\end{theorem}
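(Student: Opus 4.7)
The plan is to handle the three assertions separately, essentially by unpacking the defining identity $cx=\varphi(x)c^{\Pi(x)}$ and using the uniqueness of the expression of each element of $G$ as $xc^i$ with $x\in A$ and $i\in\mathbb{Z}_n$.

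For the forward direction, I would first observe that since $A\cap\langle c\rangle=\{1_G\}$ and $G=A\langle c\rangle$, every element of $G$ has a unique representation as $xc^i$; this makes $\varphi$ and $\Pi$ well-defined. Property (a) of the extended power function follows directly from $c=c\cdot 1_A=1_A\cdot c$, while the congruence $\Pi(x)\equiv\pi(x)\pmod{m}$ is exactly the content of the already-stated equation~\eqref{PF}. The crucial identity (c) will emerge by computing $c(xy)$ in two ways: once as $\varphi(xy)c^{\Pi(xy)}$, and once by first applying $cx=\varphi(x)c^{\Pi(x)}$ and then moving $c^{\Pi(x)}$ past $y$ via an induction that rewrites $c^ky$ as $\varphi^k(y)c^{\sum_{i=1}^{k}\Pi(\varphi^{i-1}(y))}$. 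Matching the $c$-exponents (and invoking uniqueness) yields (c).

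For the converse, I would define the group explicitly as the set $A\times\mathbb{Z}_n$ with the multiplication suggested by $(x,i)(y,j)=(x\varphi^i(y),\,i+\sum_{t=1}^{i}\Pi(\varphi^{t-1}(y))+j)$, taking the second coordinate modulo $n$. Here the key point is that because $\Pi$ reduces to $\pi$ modulo $m=|\varphi|$, the expression $\varphi^i(y)$ depends only on $i\bmod m$, and so the formula is well-defined even though $i\in\mathbb{Z}_n$. The identity element will be $(1_A,0)$, and setting $c=(1_A,1)$ gives a cyclic subgroup of order $n$ that together with $A\cong A\times\{0\}$ exactly factorizes the group; the recovery of $\varphi$ and $\Pi$ from the defining equation is then immediate.

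The main obstacle I anticipate is verifying associativity of this multiplication: expanding $((x,i)(y,j))(z,k)$ and $(x,i)((y,j)(z,k))$ each produces a double sum of values of $\Pi$ composed with powers of $\varphi$, and equality of these two expressions modulo $n$ is essentially a two-variable version of property (c). The cleanest approach is to prove by induction on $i$ the single-variable identity $\Pi(\varphi^?\text{-iterate})$ of the form $\sum_{t=1}^{i}\Pi(\varphi^{t-1}(yz))\equiv\sum_{t=1}^{i}\Pi(\varphi^{t-1}(y))+\sum_{s=1}^{S_i(y)}\Pi(\varphi^{s-1}(z))\pmod{n}$, where $S_i(y)=\sum_{t=1}^{i}\Pi(\varphi^{t-1}(y))$, and then feed this into the associativity check. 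The base case is (c), and the inductive step reduces again to (c) applied to a product.

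Finally, for the isomorphism statement, suppose $\Theta:A\langle c_1\rangle\to A\langle c_2\rangle$ with $\Theta|_A=\theta\in\Aut(A)$ and $\Theta(c_1)=c_2$. Applying $\Theta$ to $c_1x=\varphi_1(x)c_1^{\Pi_1(x)}$ gives $c_2\theta(x)=\theta(\varphi_1(x))c_2^{\Pi_1(x)}$, while the defining equation for $(\varphi_2,\Pi_2)$ gives $c_2\theta(x)=\varphi_2(\theta(x))c_2^{\Pi_2(\theta(x))}$; uniqueness of expression forces $\varphi_2\theta=\theta\varphi_1$ and $\Pi_2\theta=\Pi_1$, i.e.\ the stated identities. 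Conversely, given $\theta$ satisfying the two identities, I would define $\Theta(xc_1^i)=\theta(x)c_2^i$ and check that it respects the multiplication rule derived in the converse direction; since both sides of the rule are built from $\varphi$ and $\Pi$ in a way that is transported by $\theta$ exactly when the two compatibility conditions hold, this verification is mechanical once the associativity machinery from the previous step is in place.
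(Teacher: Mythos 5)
Your overall strategy coincides with the paper's in all three parts: read $(\varphi,\Pi)$ off the unique normal form $xc^i$ and derive property (c) by computing $c(xy)$ two ways; build the converse group on $A\times\mathbb{Z}_n$ with a twisted multiplication and reduce associativity to a composition identity for the derived function $\sigma_\Pi(y,i)=\sum_{t=1}^{i}\Pi(\varphi^{t-1}(y))$; and settle the equivalence statement by transport of structure via $\theta=\Theta|_A$. The forward direction and the isomorphism part are correct as sketched and match the paper essentially verbatim.

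There is, however, a concrete error in the converse construction that would make your verification fail as written. Your multiplication rule $(x,i)(y,j)=(x\varphi^i(y),\,i+\sum_{t=1}^{i}\Pi(\varphi^{t-1}(y))+j)$ carries a spurious summand $i$ in the exponent; the rule forced by $c^iy=\varphi^i(y)c^{\sigma_\Pi(y,i)}$ is $(x,i)(y,j)=(x\varphi^i(y),\,\sigma_\Pi(y,i)+j)$. With the extra $i$ one gets $(1_A,1)(x,0)=(\varphi(x),1+\Pi(x))$, so the extension would encode $\Pi+1$ rather than $\Pi$ (and $\Pi+1$ violates $\Pi(1_A)\equiv1\pmod n$); also $(1_A,1)^2=(1_A,3)$, so $c$ would not have order $n$, and the first coordinates of $\bigl((x,i)(y,j)\bigr)(z,k)$ and $(x,i)\bigl((y,j)(z,k)\bigr)$ differ by a factor $\varphi^i$ applied to $z$, so associativity genuinely fails. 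The same off-by-a-term slip propagates into your key lemma: you assert $\sigma_\Pi(yz,i)\equiv\sigma_\Pi(y,i)+\sigma_\Pi(z,\sigma_\Pi(y,i))\pmod n$ and claim its base case is property (c), but at $i=1$ property (c) reads $\Pi(yz)\equiv\sigma_\Pi(z,\Pi(y))$ with no extra $\Pi(y)$ term; the correct identity is $\sigma_\Pi(yz,i)\equiv\sigma_\Pi(z,\sigma_\Pi(y,i))\pmod n$, which is exactly Proposition~\ref{SIGMA2}(a) and is precisely what the associativity check needs. Once these two formulas are corrected, the remainder of your plan goes through and reproduces the paper's proof.
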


In view of Theorem~\ref{main-thm1}, the problem of determining all cyclic complementary extensions  
of a group $A$ amounts to determining all pairs $(\varphi,\Pi)$ consisting of a skew-morphism 
$\varphi$ of $A$ and an extended power function $\Pi$ of $\varphi$. This adds further importance to
the ongoing research into the classification of regular Cayley maps, skew-morphisms, and skew-product groups
 for various classes of finite groups. To mention just a few of these results, we note that 
these classification problems have been investigated for example for cyclic groups~\cite{CT2014, JS2002,CJT2007,DH2019, KN2011, KN2017}, abelian groups \cite{CJT2007,CJT2016}, dihedral groups~\cite{ZD2016,HKK2023,KK2021}, 
non-abelian simple groups~\cite{BCV2022}, and characteristically simple groups~\cite{CDL2022,DLY2022}. 

Further results concerning cyclic groups tend to classify 
skew-morphisms with additional properties, such as smooth skew-morphisms of cyclic groups \cite{BJ2017}, or
 skew-morphisms of cyclic groups which are square roots of automorphisms~\cite{HKZ2021}.
In addition, certain classes of finite groups have been shown to possess only skew-morphisms of
limited types. Specifically, it is known that every skew-morphism of a cyclic group of order $n$ 
is an automorphism if and only if $\gcd(n,\phi(n))=1$~\cite{JS2002, KN2011},
and every skew-morphism of a non-cyclic abelian group is an automorphism if and only if it is an
elementary abelian $2$-group~\cite{CJT2007}. An interesting generalization has been recently obtained in~\cite{BACH2023, HKK2023}, where it has been shown
that every skew-morphism of a cyclic groups of order $n$ is smooth
if and only if $n=2^en_1$, where $0\leq e\leq 4$, and $n_1$ is odd and square-free~\cite{BACH2023, HKK2023}.

Any classification of cyclic complementary extensions for a fixed group $A$, even in the case
when we know all of its skew-morphisms, would require the classification of all extended power 
functions for each of its skew-morphisms. Extended power functions, being a new topic introduced
in the present paper, have so far not been studied at all, and it is our belief that finding all extended
power functions for a given skew-morphism will require building an entire theory of extended power
functions. As a first step toward this goal, in the following theorem we classify all cyclic complementary extensions of the cyclic group $A=\langle a|a^k=1\rangle$ corresponding to automorphisms of $A$; 
the proof of this theorem will be given in Section~5.

\begin{theorem}\label{main-thm2}
Let $\varphi:a\mapsto a^r$ be an automorphism of the cyclic group $A=\langle a|a^k=1\rangle$
of order $k$, where $1\leq r<k$ and $\gcd(r,k)=1$, and let $n$ be a positive multiple of the multiplicative order $m$
 of $r$ in $\mathbb{Z}_k$. Then, every cyclic complementary extension $G=A\langle c\rangle$ of $A$ 
 by a cyclic group $C=\langle c| c^n=1\rangle$ of order $n$ 
determined by the automorphism $\varphi$ has a presentation
\[
G=\langle a,c|a^k=c^n=1, c^ma=ac^{mt}, ca=a^rc^{1+ms}\rangle,
\]
where the parameters $r\in\mathbb{Z}_k^*$ and $s,t\in\mathbb{Z}_{n/m}$ satisfy the conditions:
\begin{enumerate}[\rm(a)]
\item $t^{r-1}\equiv 1\pmod{n/m}$,
\item $s\sum_{i=1}^kt^{i-1}\equiv0\pmod{n/m}$,
\item $s\sum_{i=1}^m(\sum_{j=1}^rt^{j-1})^{i-1}\equiv t-1\pmod{n/m}$.
\end{enumerate}
\end{theorem}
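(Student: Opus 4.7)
My plan is to apply Theorem~\ref{main-thm1} to parametrize the extension $G$ by the pair $(\varphi,\Pi)$ and then exploit the fact that $\varphi$ is an automorphism to put $\Pi$ in a very restricted form. Since $\varphi$ is an automorphism of order $m$, the associated power function satisfies $\pi(x)\equiv 1\pmod{m}$ for all $x\in A$, and combined with $\Pi\equiv\pi\pmod{m}$ this forces $\Pi(x)=1+m\sigma(x)$ for a function $\sigma\colon A\to\mathbb{Z}_d$ with $d:=n/m$. Setting $s:=\sigma(a)$ immediately gives the defining relation $ca=a^r c^{1+ms}$.

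For the other defining relation $c^m a=ac^{mt}$, I use that the core $\langle c^m\rangle$ of $\langle c\rangle$ in $G$ is cyclic of order $d$ and normal in $G$; since $a$ acts on it by conjugation, there is a unique unit $t\in\mathbb{Z}_d^*$ making the relation hold. The explicit value is obtained by iterating $ca=a^r c^{1+ms}$ via the identity $c^i x=\varphi^i(x)c^{\Pi_i(x)}$ with $\Pi_i(x):=\sum_{j=0}^{i-1}\Pi(\varphi^j(x))$, which I verify inductively from the cocycle for $\Pi$; setting $i=m$ then gives $mt\equiv\sum_{j=0}^{m-1}\Pi(a^{r^j})\pmod{n}$. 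Condition (a) follows by computing $c^{m+1}a$ in two ways, as $c\cdot(c^m a)$ and as $c^m\cdot(ca)$, and comparing the $c$-exponents: this yields $t^r\equiv t\pmod{d}$, which, as $t$ is a unit, is equivalent to (a).

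For (b) and (c) I derive a recurrence for $\sigma$. Applying the cocycle for $\Pi$ to $x=a^u$, $y=a$, the sum $\sum_{i=1}^{\Pi(a^u)}\Pi(a^{r^{i-1}})$ has $1+m\sigma(a^u)$ terms, which I split into one leading term $\Pi(a)$ plus $\sigma(a^u)$ consecutive full $\varphi$-orbits of length $m$; each such orbit sums to $mt$ by the previous step, yielding the linear recurrence $\sigma(a^{u+1})\equiv s+t\sigma(a^u)\pmod{d}$, whose solution is $\sigma(a^u)=s\sum_{i=1}^u t^{i-1}$. The requirement $\sigma(a^k)\equiv 0\pmod{d}$ forced by $a^k=1$ is exactly (b). For (c), I first use (a) to deduce $t^{r^j}\equiv t\pmod{d}$ for all $j\geq 0$, then combine this with the elementary identity $\sum_{l=0}^{uv-1}t^l=\bigl(\sum_{l=0}^{u-1}t^l\bigr)\bigl(\sum_{i=0}^{v-1}(t^u)^i\bigr)$ to conclude $\sigma(a^{r^{j+1}})\equiv T\sigma(a^{r^j})\pmod{d}$ with $T:=\sum_{i=1}^r t^{i-1}$; induction gives $\sigma(a^{r^j})\equiv sT^j\pmod{d}$, and substituting into $t-1\equiv\sum_{j=0}^{m-1}\sigma(a^{r^j})\pmod{d}$ gives (c).

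For the converse direction, given $s,t$ satisfying (a)--(c) I would define $\sigma$ by the same closed formula and check that $\Pi=1+m\sigma$ is a bona fide extended power function for $\varphi$, so that Theorem~\ref{main-thm1} produces an extension with the stated presentation. Condition (b) makes $\Pi$ well-defined on $A=\langle a\rangle$, while (a) and (c) are the compatibilities needed to extend the cocycle identity from the generating pairs $(a^u,a)$ to all pairs of group elements. This extension of the cocycle to arbitrary pairs is, in my view, the main technical obstacle, and I would handle it by reducing via induction to the already-established recurrence and the multiplicative identity above, which governs the index shifts coming from iterating $\varphi$.
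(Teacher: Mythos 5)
Your proposal is correct and follows essentially the same route as the paper: it invokes Theorem~\ref{main-thm1}, writes $\Pi(x)=1+m\sigma(x)$ and derives the closed form $\sigma(a^u)=s\sum_{i=1}^{u}t^{i-1}$ together with conditions (a)--(c), then verifies the converse by checking the extended-power-function axioms. The only cosmetic difference is that you rederive from scratch (via the recurrence $\sigma(a^{u+1})\equiv s+t\sigma(a^u)$ and the two-way computation of $c^{m+1}a$) the facts the paper packages as the average and mate functions $\Av$ and $\Lambda$ in Proposition~\ref{Av-mate} and Lemma~\ref{ext-auto}.
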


Our paper is organized as follows. In Section~\ref{sec:dpf} we list several known results as well as 
prove some new ones concerning power functions of skew-morphisms. In the next Section~\ref{sec:epf}, we define
the key concept of an extended power function and derive some of its fundamental
properties. 
Theorem~\ref{main-thm1} is proved in Section~\ref{sec:esp} 
where a correspondence is established between cyclic complementary extensions of a group $A$ and pairs 
consisting of a skew-morphism $\varphi$ of $A$ and an extended power function $\Pi$ of $\varphi$. As 
an application
of the theory developed in Sections~\ref{sec:epf}~and~\ref{sec:esp}, in Section~\ref{sec:app} we determine cyclic complementary extensions of cyclic groups
corresponding to automorphisms. The paper is closed with a section
devoted to open problems and directions for future research.
\section{Derived power functions}\label{sec:dpf}
In this section we state some preliminary terminology and results 
on skew-morphism for future reference.

Let  $\varphi$ be a skew-morphism of a finite group $A$ of order $m:=|\varphi|$, 
with the associated power function $\pi : A \to {\Bbb Z}_m $. Given any $f:A\to\mathbb{N}$, 
we may define a {\em derived function} $\sigma_{f}:A\times\mathbb{N}\to\mathbb{N}$  by
 \begin{align*}
 &\sigma_f(x,0)=0,\quad\sigma_f(x,k)=\sum\limits_{i=1}^{k-1}f(\varphi^{i-1}(x)),~k>0.
 \end{align*}
Specifically, the {\em derived function $\sigma_{\pi}$ of the power function} $\pi:A\to\mathbb{Z}_m$ is defined by
 \begin{align}\label{Expf1}
\sigma_{\pi}(x,0)=0,\quad \sigma_{\pi}(x,k)=\sum\limits_{i=1}^{k-1}\pi(\varphi^{i-1}(x)), \quad k>0.
 \end{align}
It will be referred to as the \textit{derived power function} of $\varphi$.

It is well known that the subset
\begin{align*}
\Ker\pi:=\{x\in A\mid \pi(x)\equiv1\pmod{m}\}
\end{align*}
is a subgroup of $A$, called the \textit{kernel} of $\pi$~\cite{JS2002}.  The following properties of the skew-morphisms, their power
functions, and their derived power functions have already been used in a large number of papers in this area.

\begin{proposition}[\cite{HKZ2021, JS2002}]\label{Basic}
Let $\varphi$ be a skew-morphism of a group $A$ of order $m:=|\varphi|$, 
$\pi$ be the power function of $\varphi$, and $\sigma_{\pi}$ be its derived power function.
Then the following statements hold true:
\begin{enumerate}[\rm(a)]
\item For any $x,y\in A$, $\pi(x)\equiv\pi(y)\pmod{m}$ if and only if $xy^{-1}\in\Ker\pi$.
\item $\pi(xy)\equiv\sigma_{\pi}(y,\pi(x))\pmod{m}$  for any $x,y\in A$.
\item $\sigma_{\pi}(x,m)\equiv0\pmod{m}$ for all $x\in A$.
\end{enumerate}
Moreover, suppose that $k$ is an arbitrary nonnegative integer. Then
\begin{enumerate}[\rm(d)]
\item[\rm(d)]  $\varphi^{k}(xy)=\varphi^{k}(x)\varphi^{\sigma_{\pi}(x,k)}(y)$  for any $x,y\in A$.
\item[\rm(e)]  $\mu:=\varphi^k$ is a skew-morphism of $A$ if and only if there exists a
function $\pi_{\mu}:A\to\mathbb{Z}_{m/\gcd(m,k)}$ such that $k\pi_{\mu}(x)\equiv\sigma_{\pi}(x,k)\pmod{m}$
for all $x\in A$.
\item[\rm(f)] $\varphi^k$ is an automorphism of $A$ if and only if $\sigma_{\pi}(x,k)\equiv k\pmod{m}$ for all $x\in A$.
\end{enumerate}
\end{proposition}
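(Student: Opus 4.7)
The plan is to establish part (d) first, as the remaining statements all cascade from the iteration formula. I would prove $\varphi^k(xy) = \varphi^k(x)\varphi^{\sigma_\pi(x,k)}(y)$ by induction on $k$: the base case $k=1$ coincides with the defining skew-morphism axiom (and matches $\sigma_\pi(x,1) = \pi(x)$), and the inductive step is obtained by applying $\varphi$ once more and invoking the axiom, producing the new exponent $\sigma_\pi(x,k) + \pi(\varphi^k(x)) = \sigma_\pi(x,k+1)$.

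With (d) in hand, parts (c), (e), (f) are immediate. Setting $k=m$ in (d) and using $\varphi^m = \id$ forces $\sigma_\pi(x,m) \equiv 0 \pmod m$, which is (c). Part (f) follows because (d) makes $\varphi^k$ multiplicative precisely when $\sigma_\pi(x,k) \equiv k \pmod m$ for every $x$. Part (e) refines this: asking for $\mu = \varphi^k$ of order $m/\gcd(m,k)$ to be a skew-morphism with power function $\pi_\mu$ amounts to solving $k\,\pi_\mu(x) \equiv \sigma_\pi(x,k) \pmod m$ for each $x$, which is exactly the stated condition (with $\pi_\mu$ well-defined modulo $m/\gcd(m,k)$). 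For (b), I would compare the two expansions of $\varphi((xy)z)$ and $\varphi(x(yz))$ obtained from the skew-morphism axiom, using (d) to distribute $\varphi^{\pi(x)}$ across $yz$; cancelling the common left factors leaves $\varphi^{\pi(xy)}(z) = \varphi^{\sigma_\pi(y,\pi(x))}(z)$ for all $z$, hence the required congruence.

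Part (a) I would save for last. One direction is immediate from (b): if $xy^{-1} = k \in \Ker\pi$, then $\pi(x) = \pi(ky) \equiv \sigma_\pi(y,\pi(k)) = \sigma_\pi(y,1) = \pi(y) \pmod m$. The converse is the main obstacle, since one must show that the level sets of $\pi$ are exactly the right cosets of $\Ker\pi$ rather than merely unions of them. The standard route is a counting/orbit argument in the skew-product $A\langle\varphi\rangle$, showing that $\pi$ descends to an injection $A/\Ker\pi \hookrightarrow \mathbb{Z}_m$; this is where I expect the bulk of the technical work to lie.
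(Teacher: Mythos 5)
The paper itself offers no proof of this proposition (it is quoted from the cited references), so your argument has to stand on its own. Parts (b)--(f) are handled correctly and in the standard way: the induction for (d) is right (with $\sigma_\pi(x,k)=\sum_{i=1}^{k}\pi(\varphi^{i-1}(x))$, the upper limit $k-1$ in the paper's displayed definition being a typo, as its subsequent use shows), and (c), (e), (f) do fall out of (d) exactly as you describe; your derivation of (b) by expanding $\varphi((xy)z)$ and $\varphi(x(yz))$ and cancelling is also correct.

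The one genuine gap is the converse direction of (a). You correctly isolate it as the only nontrivial claim, but you do not prove it: ``showing that $\pi$ descends to an injection $A/\Ker\pi\hookrightarrow\mathbb{Z}_m$'' is a restatement of the claim, and the ``counting/orbit argument'' is left entirely unspecified. In fact no counting is needed; the converse follows in two lines from (b) and (c), which you already have. First, $\pi(1_A)\equiv 1\pmod m$, since $\varphi(y)=\varphi(1_A\, y)=\varphi^{\pi(1_A)}(y)$ for all $y$. Next, by (c) together with $\sigma_\pi(z,qm+r)=\sigma_\pi(z,r)+q\,\sigma_\pi(z,m)$ (valid because $\varphi^m=\id$), the residue $\sigma_\pi(z,j)\bmod m$ depends only on $j\bmod m$. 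Hence if $\pi(x)\equiv\pi(y)\pmod m$, then
\[
\pi(xy^{-1})\equiv\sigma_\pi(y^{-1},\pi(x))\equiv\sigma_\pi(y^{-1},\pi(y))\equiv\pi(yy^{-1})=\pi(1_A)\equiv1\pmod m,
\]
so $xy^{-1}\in\Ker\pi$. With this substitution the argument is complete; as written, part (a) is only half proved.
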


Note that $\Ker\pi$ is not necessarily a $\varphi$-invariant subgroup of $A$.
 However, the subset
\[
\Core\pi:=\bigcap_{i=1}^m\varphi^i(\Ker\pi)
\]
is the largest $\varphi$-invariant subgroup of $A$ contained in $\Ker\pi$~\cite{Zhang2015b}, and it is 
termed the \textit{core} of $\pi$. It can be shown that 
$\Core\pi=\bigcap_{g\in G}A^g$, where $G=A\langle\varphi\rangle$ is the skew product of $A$ with $\varphi$~\cite[Proposition~6]{HNWY2019}. Thus,  in particular, $\Core\pi$ is normal in $A$.

In general, if $N$ is a $\varphi$-invariant normal subgroup of $A$, then we may define a permutation $\bar\varphi$
 of the quotient group $\bar A:=A/N$ by
 \[
\bar \varphi(\bar x)=\overline{\varphi(x)},\quad x\in A.
 \]
 It is known that $\bar\varphi$ is a skew-morphism of $\bar A$~\cite{ZD2016} with the associated 
 power function $\bar\pi$ determined by $\bar\pi(\bar x)\equiv\pi(x)\pmod{|\bar\varphi|}$, $x\in A$. 
It will be referred to as the \textit{quotient skew-morphism} of $\varphi$ induced by $N$.

The \textit{period} of the power function $\pi:A\to\mathbb{Z}_m$ of a skew-morphism $\varphi$
 is defined to be the smallest positive integer $p$ such that 
\[
\pi(\varphi^{p}(x))\equiv\pi(x)\pmod{m},\quad\text{ for all $x\in A$}.
\]
A skew-morphisms of period $1$ is termed \textit{smooth} (or~\textit{coset-preserving}). 
Further properties of smooth skew-morphisms can be found in~~\cite{BJ2017, WHYZ2019}.

The following result reveals the relationship between the period of $\pi$ and the quotient skew-morphism of $\varphi$ induced by $\Core\pi$.
\begin{proposition}[\cite{WHYZ2019}]\label{smooth-basic}
Let $\varphi$ be a skew-morphism of a group $A$ with the associated power function 
$\pi:A\to\mathbb{Z}_m$, where $m=|\varphi|$,  and let $\bar\varphi$ 
be the quotient skew-morphism of $\varphi$ induced by $\Core\pi$, and let $p$ be the period of $\varphi$.
Then  the following statements hold:
\begin{enumerate}[\rm(a)]
\item $p$ divides $m$ and is equal to the order  of $\bar\varphi$.
\item $\varphi^{p}$ is a smooth skew-morphism of $A$ of order $m/p$.
\item For any positive integer $k$, $\sigma_{\pi}(x,k)\equiv 0\pmod{p}$ for any $x\in A$ if and only if $p|k$.
\end{enumerate}
\end{proposition}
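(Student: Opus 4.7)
The plan is to prove (a) directly and then derive (b) and (c) as consequences of (a) combined with Proposition~\ref{Basic}. The unifying idea is that the period $p$ precisely measures the obstruction to $\varphi$ acting trivially on the quotient $\bar A=A/\Core\pi$, which makes the identification $p=|\bar\varphi|$ natural.

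For (a), the divisibility $p\mid m$ follows once one notes that the set $T=\{k\in\mathbb{Z} : \pi\circ\varphi^k=\pi\}$ is a subgroup of $\mathbb{Z}$ (closure under addition uses $\pi\circ\varphi^{k_1+k_2}=(\pi\circ\varphi^{k_1})\circ\varphi^{k_2}$) containing $m$, so $T=p\mathbb{Z}$. For $p=|\bar\varphi|$, the direction $p\mid|\bar\varphi|$ is immediate: $\bar\varphi^q=\id_{\bar A}$ forces $\varphi^q(x)x^{-1}\in\Core\pi\subseteq\Ker\pi$, whence $\pi\circ\varphi^q=\pi$ by Proposition~\ref{Basic}(a). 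For the harder direction $|\bar\varphi|\mid p$, I would use the characterization $\Core\pi=\bigcap_i\varphi^i(\Ker\pi)$, so it suffices to check $\varphi^i(\varphi^p(x)x^{-1})\in\Ker\pi$ for all $i\in\mathbb{Z}$ and $x\in A$. Applying Proposition~\ref{Basic}(d) to the products $\varphi^p(x)\cdot x^{-1}$ and $x\cdot x^{-1}=1_A$, and using periodicity to simplify $\sigma_\pi(\varphi^p(x),i)=\sigma_\pi(x,i)$, produces the key identity
\[
\varphi^i(\varphi^p(x)x^{-1})=\varphi^{i+p}(x)\cdot\varphi^i(x)^{-1}.
\]
By Proposition~\ref{Basic}(a), the right-hand side lies in $\Ker\pi$ iff $\pi(\varphi^{i+p}(x))=\pi(\varphi^i(x))$, which is the periodicity hypothesis applied to $y=\varphi^i(x)$.

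For (b) and (c), $\varphi^p$ has order $m/\gcd(m,p)=m/p$ since $p\mid m$. Proposition~\ref{Basic}(c) applied to $\bar\varphi$ of order $p$ yields $\sigma_{\bar\pi}(\bar x,p)\equiv 0\pmod p$, which via $\bar\pi(\bar x)\equiv\pi(x)\pmod p$ lifts to $\sigma_\pi(x,p)\equiv 0\pmod p$. Proposition~\ref{Basic}(e) then produces the power function $\pi_{\varphi^p}\in\mathbb{Z}_{m/p}$ satisfying $p\,\pi_{\varphi^p}(x)\equiv\sigma_\pi(x,p)\pmod m$, and smoothness of $\varphi^p$ reduces, via this defining congruence, to $\sigma_\pi(\varphi^p(x),p)=\sigma_\pi(x,p)$, which is again an immediate consequence of periodicity. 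For (c), the forward direction is immediate: periodicity splits $\sigma_\pi(x,p\ell)$ into $\ell$ identical blocks, yielding $\ell\,\sigma_\pi(x,p)\equiv 0\pmod p$. For the converse, the hypothesis applied at $x=1_A$ forces $\sigma_\pi(1_A,k)=k\equiv 0\pmod p$, since $\varphi$ fixes $1_A$ and $\pi(1_A)\equiv 1\pmod m$; hence $p\mid k$.

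The main obstacle is the displayed identity in the proof of (a), where one must leverage the product rule of Proposition~\ref{Basic}(d) together with the single hypothesis $\pi\circ\varphi^p=\pi$ to promote membership in $\Ker\pi$ to membership in the smaller subgroup $\Core\pi$. Once this identity is established, the remaining statements (b) and (c) follow systematically from the basic properties of skew-morphisms collected in Proposition~\ref{Basic}.
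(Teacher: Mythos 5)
Your argument is correct, but note that the paper itself offers no proof of Proposition~\ref{smooth-basic}: it is quoted from the reference [WHYZ2019] (\emph{Smooth skew-morphisms of dihedral groups}), so there is no in-paper proof to compare against. Judged on its own, your proof is sound and self-contained given Proposition~\ref{Basic}. The subgroup argument for $p\mid m$, the two divisibility directions identifying $p$ with $|\bar\varphi|$, and in particular the key identity $\varphi^i(\varphi^p(x)x^{-1})=\varphi^{i+p}(x)\varphi^i(x)^{-1}$ (which follows from Proposition~\ref{Basic}(d) applied to $\varphi^p(x)\cdot x^{-1}$ and to $x\cdot x^{-1}$, together with $\sigma_\pi(\varphi^p(x),i)\equiv\sigma_\pi(x,i)\pmod m$) all check out. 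For (b), the chain ``$\sigma_{\bar\pi}(\bar x,p)\equiv 0\pmod p$ lifts to $\sigma_\pi(x,p)\equiv 0\pmod p$, hence Proposition~\ref{Basic}(e) applies with $k=p$, and smoothness follows from $\sigma_\pi(\varphi^p(x),p)=\sigma_\pi(x,p)$'' is exactly right; the only point worth making explicit in a final write-up is that the solvability of $p\,\pi_\mu(x)\equiv\sigma_\pi(x,p)\pmod m$ is equivalent to $\gcd(p,m)=p$ dividing $\sigma_\pi(x,p)$, which is what the lift supplies. Part (c) is likewise correct, with the evaluation at $1_A$ giving $\sigma_\pi(1_A,k)\equiv k\pmod m$ and hence $\pmod p$.
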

The power function associated with the skew-morphism $\varphi^p$ in 
Proposition~\ref{smooth-basic}(b) is particularly important in the study of $\varphi$. It was first investigated in~\cite{HR2022}
under the name of the \textit{average function} of $\pi$, and we will denote it by $\av:A\to\mathbb{Z}_{m/p}$. 
It is defined via the formula
\[
\av(x)=\frac{1}{p}\sigma_{\pi}(x,m),\quad x\in A.
\]

Moreover, since the power function associated with the quotient skew-morphism $\bar\varphi$ of $\varphi$ 
induced by $\Core\pi$ is determined by $\bar\pi(\bar x)\equiv\pi(x)\pmod{p}$, $x\in A$, we may define
a function $\lambda:A\to\mathbb{Z}_{m/p}$ by 
\[
\lambda(x)=\frac{1}{p}\big(\pi(x)-\bar\pi(\bar x)\big),\quad x\in A.
\]
It will be called the \textit{mate function} of $\pi$ (\textit{with respect to} $\bar\pi$). The above
defined functions have the following important properties.

\begin{proposition}\label{av-lambda}
Let $\varphi$ be a skew-morphism of a group $A$ with the associated power function
 $\pi:A\to\mathbb{Z}_m$, where $m$ is the order of $\varphi$, let $\bar\varphi$ be the 
 quotient skew-morphism of $\bar A:=A/\Core\pi$ induced by $\Core\pi$, and let $\av:A\to\mathbb{Z}_{m/p}$
 and $\lambda:A\to\mathbb{Z}_{m/p}$ be the average function and the mate function of $\pi$, respectively. 
 Then the following statements hold true:
 \begin{enumerate}[\rm(a)]
 \item $\av(\varphi(x))\equiv\av(x)\pmod{m/p}$ for all $x\in A$.
 \item $\av$ is a homomorphism from $A$ into the multiplicative group $\mathbb{Z}_{m/p}^*$.
 \item If $\bar\varphi$ is an automorphism of $\bar A$, then $\lambda(xy)=\av(y)\lambda(x)+\lambda(y)$, for all $x,y\in A$.
 In particular, if $\av(x)\equiv1\pmod{n/m}$ for all $x\in A$, then $\lambda$ is a homomorphism from $A$ into
 the additive group $\mathbb{Z}_{m/p}$.
 \end{enumerate}
\end{proposition}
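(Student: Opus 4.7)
The plan is to handle the three parts in sequence, with the crucial enabling fact that $\av$ is the power function of the smooth skew-morphism $\mu:=\varphi^{p}$ of order $m/p$ (Proposition~\ref{smooth-basic}(b)), and with the structural formulas of Proposition~\ref{Basic} doing most of the work. For part (a) I would compute directly: $p\cdot\av(\varphi(x))=\sigma_{\pi}(\varphi(x),m)=\sum_{i=1}^{m}\pi(\varphi^{i}(x))$, and since $\varphi^{m}=\id$ the closing term $\pi(\varphi^{m}(x))=\pi(x)=\pi(\varphi^{0}(x))$ shows that this sum coincides with $\sum_{i=1}^{m}\pi(\varphi^{i-1}(x))=p\cdot\av(x)$. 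Hence $\av(\varphi(x))=\av(x)$ as integers, and iterating, $\av(\varphi^{k}(x))=\av(x)$ for every $k\geq 0$.

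For part (b), I would apply Proposition~\ref{Basic}(b) directly to the skew-morphism $\mu=\varphi^{p}$ (whose power function is $\av$), obtaining $\av(xy)\equiv\sigma_{\av}(y,\av(x))\pmod{m/p}$, where the derived sum is taken over iterates of $\mu$. Part (a) forces every summand $\av(\mu^{i-1}(y))=\av(\varphi^{(i-1)p}(y))$ to equal $\av(y)$, so $\sigma_{\av}(y,\av(x))=\av(x)\av(y)$, and the multiplicativity $\av(xy)\equiv\av(x)\av(y)\pmod{m/p}$ follows. That $\av$ actually lands in $\mathbb{Z}_{m/p}^{*}$ is the standard fact that the power function of any skew-morphism of order $n$ takes values in $\mathbb{Z}_{n}^{*}$.

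Part (c) is where the real calculation lies. Since $\bar\varphi$ is an automorphism of $\bar A$ we have $\bar\pi\equiv 1\pmod{p}$, and by the definition of the mate function this lifts to $\pi(x)=1+p\lambda(x)$ for every $x\in A$. Applying Proposition~\ref{Basic}(b) to $\varphi$ and peeling off the first summand yields $\pi(xy)\equiv\sigma_{\pi}(y,1+p\lambda(x))=\pi(y)+\sigma_{\pi}(\varphi(y),p\lambda(x))\pmod{m}$. The pivotal intermediate identity is $\sigma_{\pi}(z,kp)\equiv kp\cdot\av(z)\pmod{m}$ for all $z\in A$ and $k\geq 0$, which I would obtain by comparing Proposition~\ref{Basic}(d) applied to $\varphi^{kp}$ (which produces $\sigma_{\pi}(z,kp)$ in the exponent) against the skew-morphism formula for $\mu^{k}=\varphi^{kp}$ (which produces $p\sigma_{\av}(z,k)=kp\av(z)$, again using part (a)). Plugging this identity with $z=\varphi(y)$ and $k=\lambda(x)$ into the previous display, and invoking part (a) once more to replace $\av(\varphi(y))$ by $\av(y)$, gives $\pi(xy)\equiv\pi(y)+p\av(y)\lambda(x)\pmod{m}$. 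Substituting $\pi(xy)=1+p\lambda(xy)$ and $\pi(y)=1+p\lambda(y)$ and dividing by $p$ then produces $\lambda(xy)\equiv\av(y)\lambda(x)+\lambda(y)\pmod{m/p}$, and the ``in particular'' claim is immediate.

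The principal obstacle is assembling the bridge identity $\sigma_{\pi}(z,kp)\equiv kp\cdot\av(z)\pmod{m}$, which ties the original power function $\pi$ to the average function $\av$ through the smooth skew-morphism $\varphi^{p}$; once this identity is in hand, parts (b) and (c) reduce to short manipulations within Proposition~\ref{Basic}, with part (a) supplying the constancy of $\av$ along $\varphi$-orbits that makes the sums collapse.
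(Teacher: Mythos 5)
Your argument is correct and, for part (c), follows essentially the same route as the paper: the paper's one-line chain of congruences silently uses exactly your bridge identity $\sigma_\pi(y,p\lambda(x))\equiv\lambda(x)\sigma_\pi(y,p)\equiv p\lambda(x)\av(y)\pmod m$, which you isolate and justify explicitly. For (a) and (b) the paper merely cites \cite{HR2022}, and the proofs you supply (constancy of $\av$ along $\varphi$-orbits, then Proposition~\ref{Basic}(b) applied to $\mu=\varphi^{p}$ with the sum collapsing to $\av(x)\av(y)$) are the standard ones and are sound. One small caveat: the paper's displayed definition $\av(x)=\frac{1}{p}\sigma_\pi(x,m)$ must be read as $\frac{1}{p}\sigma_\pi(x,p)$ (the literal formula gives $\av\equiv 0\pmod{m/p}$, since $\sigma_\pi(x,m)\equiv\frac{m}{p}\sigma_\pi(x,p)\pmod m$, and (b) would fail); with the intended definition your cyclic-shift argument for (a) goes through verbatim once the justification ``$\varphi^{m}=\id$'' is replaced by the period-$p$ property $\pi(\varphi^{p}(x))\equiv\pi(x)\pmod m$ from Proposition~\ref{smooth-basic}.
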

\begin{proof}

The proof of (a) and (b) can be found in \cite{HR2022}. We give a proof of (c). Since $\bar\varphi$
is an automorphism of $\bar A$, we have $\bar\pi(\bar x)\equiv1\pmod{p}$ for all $x\in A$, so
\[
\lambda(x)=\frac{1}{p}(\pi(x)-1)\pmod{m/p}.
\]
 Thus, for all $x,y\in A$, we have
\begin{align*}
\lambda(xy)&\equiv\frac{1}{p}(\pi(xy)-1)\equiv\frac{1}{p}\big(\sigma_{\pi}(y,\pi(x))-1\big)\equiv\frac{1}{p}\big(\pi(y)-1+\frac{\pi(x)-1}{p}\sigma_{\pi}(y,p)\big)\\
&\equiv\lambda(y)+\lambda(x)\av(y)\pmod{m/p}.
\end{align*}
In particular,  if $\av(x)\equiv1\pmod{m/p}$, for all $x\in A$, then the equation reduces
to $\lambda(xy)\equiv\lambda(x)+\lambda(y)\pmod{m/p}$, so $\lambda$ is 
a homomorphism of $A$ into the additive group 
$\mathbb{Z}_{n/m}$, as required.
\end{proof}

Now we turn to the derived power function $\sigma_{\pi}$ of $\pi$. It  has the following properties.
\begin{proposition}\label{SIGMA}
Let  $\varphi$ be a skew-morphism of a group $A$ of order $m$ with the
associated  derived power function $\sigma_{\pi}: A \times {\Bbb N} \to {\Bbb N}  $. 
Then $\sigma_{\pi}$ has the following properties:
\begin{enumerate}[\rm(a)]
\item $\sigma_{\pi}(xy,k)\equiv\sigma_{\pi}(y,\sigma_{\pi}(x,k))\pmod{m}$  for all $x,y\in A$.
\item $\sigma_{\pi}(x,k)\equiv0\pmod{m}$ for all $x\in A$ if and only if $m|k$.
\item  $\sigma_{\pi}(x,mq+r) \equiv \sigma_{\pi}(x,r) \pmod{m} $ for all $x\in A$.
\item $\sigma_{\pi}(x,k_1)\equiv\sigma_{\pi}(x,k_2)\pmod{m}$ for all $x\in A$  if and only if $k_1\equiv k_2\pmod{m}$.
\end{enumerate}
\end{proposition}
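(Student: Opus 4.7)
The plan is to prove all four parts of Proposition~\ref{SIGMA} from the definition of $\sigma_\pi$ as a telescoping sum, together with the building blocks already collected in Proposition~\ref{Basic}. The essential tool is the additivity identity
\[
\sigma_\pi(x,\,k_1+k_2) \;\equiv\; \sigma_\pi(x,k_1) + \sigma_\pi(\varphi^{k_1}(x),\,k_2) \pmod{m},
\]
which I would derive in one line by splitting the defining sum $\sum_{i=1}^{k_1+k_2}\pi(\varphi^{i-1}(x))$ at index $k_1$ and re-indexing the second block via $j=i-k_1$. Combined with the fact that $\varphi^m=\mathrm{id}$, this reduces the remaining parts to bookkeeping.

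For part (a), I would iterate Proposition~\ref{Basic}(d). Applying $\varphi^k$ to the triple product $xyz$ in two ways, first as $(xy)\cdot z$ and then as $x\cdot(yz)$ (with a second application of (d) to expand $\varphi^{\sigma_\pi(x,k)}(yz)$), yields two expressions for $\varphi^k(xyz)$ in the common form $\varphi^k(x)\,\varphi^{\sigma_\pi(x,k)}(y)\,\varphi^{e}(z)$. Comparing the exponents of $z$ in these two expressions forces $\sigma_\pi(xy,k) \equiv \sigma_\pi(y,\sigma_\pi(x,k))\pmod{m}$.

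For part (b), one direction is straightforward: if $m\mid k$, write $k=mq$ and apply the additivity identity $q$ times; each increment contributes a term of the form $\sigma_\pi(\varphi^{mj}(x),m) = \sigma_\pi(x,m)\equiv 0\pmod{m}$ by Proposition~\ref{Basic}(c) and $\varphi^m=\mathrm{id}$. For the converse, I would specialize at $x=1_A$: since $\varphi(1_A)=1_A$ and $\pi(1_A)\equiv 1\pmod{m}$, the defining sum collapses to $\sigma_\pi(1_A,k)\equiv k\pmod{m}$, forcing $m\mid k$. Parts (c) and (d) then drop out quickly. For (c), the additivity identity with $k_1=mq$ gives $\sigma_\pi(x,mq+r) = \sigma_\pi(x,mq) + \sigma_\pi(\varphi^{mq}(x),r) = 0 + \sigma_\pi(x,r)$ modulo $m$, using $\varphi^{mq}=\mathrm{id}$ and (b). For (d), assuming without loss of generality $k_1\ge k_2$, I would use additivity to rewrite $\sigma_\pi(x,k_1)-\sigma_\pi(x,k_2) \equiv \sigma_\pi(\varphi^{k_2}(x),\,k_1-k_2)\pmod{m}$; since $\varphi^{k_2}$ is a bijection on $A$, the hypothesis in (d) is equivalent to $\sigma_\pi(y,k_1-k_2)\equiv 0\pmod{m}$ for every $y\in A$, which by part (b) is equivalent to $m\mid(k_1-k_2)$.

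I do not foresee a genuine obstacle in this argument; the main conceptual step is recognizing the additivity identity, after which the four parts chain off of it and of Proposition~\ref{Basic}. The mildly delicate point is the asymmetry in (a): the order $\sigma_\pi(y,\sigma_\pi(x,k))$ rather than $\sigma_\pi(x,\sigma_\pi(y,k))$ is dictated by where the trailing factor $z$ lands under the skew-morphism identity, so one must keep track of the associativity choice carefully to avoid a swapped-argument slip.
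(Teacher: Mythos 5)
Your proof is correct. For parts (b)--(d) it is essentially the paper's argument: the ``additivity identity'' you isolate is exactly the sum-splitting (together with the periodicity $\varphi^{m}=\operatorname{id}$) that the paper performs inline, and your treatment of (c) and of the converse of (d) (reducing to $\sigma_{\pi}(y,k_{1}-k_{2})\equiv 0$ via the bijection $\varphi^{k_{2}}$ and invoking (b)) matches the paper's line for line. The two places where you genuinely diverge are both in your favour for brevity. For (a), the paper expands $\sigma_{\pi}(xy,k)=\sum_{i=1}^{k}\pi(\varphi^{i-1}(xy))$ using Proposition~\ref{Basic}(d) and then (b), and regroups the resulting double sum into $\sum_{j=1}^{\sigma_{\pi}(x,k)}\pi(\varphi^{j-1}(y))$; your route---applying $\varphi^{k}$ to $xyz$ under the two bracketings and comparing the exponents on the trailing factor $z$---avoids that double-sum bookkeeping entirely, at the modest cost of the extra observation that $\varphi^{a}(z)=\varphi^{b}(z)$ for all $z$ forces $a\equiv b\pmod{m}$ since $m$ is the order of $\varphi$. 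For the converse of (b), your specialization $\sigma_{\pi}(1_{A},k)\equiv k\pmod{m}$ is shorter than the paper's argument, which instead derives $\varphi^{k}(x)=x$ for every $x$ from $1=\varphi^{k}(xx^{-1})=\varphi^{k}(x)\varphi^{\sigma_{\pi}(x,k)}(x^{-1})$ and then concludes $m\mid k$. Both approaches are elementary and of comparable total length; yours makes the additivity identity explicit and reusable, while the paper works directly with the raw sums.
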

\begin{proof}(a) By Proposition~\ref{Basic}(b), we have
\begin{align*}
\sigma_{\pi}(xy,k)&=\sum_{i=1}^k\pi(\varphi^{i-1}(xy))=\sum_{i=1}^k\pi\big(\varphi^{i-1}(x)\varphi^{\sigma_{\pi}(x,i-1)}(y)\big)
\equiv\sum_{i=1}^k\sum_{j=1}^{\pi(\varphi^{i-1}(x))}\pi\big(\varphi^{\sigma_{\pi}(x,i-1)+j-1}(y)\big)\\
&=\sum_{j=1}^{\pi(x)}\pi(\varphi^{j-1}(y))+\sum_{j=1}^{\pi(\varphi(x))}\pi\big(\varphi^{\sigma_{\pi}(x,1)+j-1}(y)\big)+\cdots+\sum_{j=1}^{\pi(\varphi^{k-1}(x))}\pi(\varphi^{\sigma_{\pi}(x,k-1)+j-1}(y))\\
&=\sum_{j=1}^{\sigma_{\pi}(x,k)}\pi(\varphi^{j-1}(y))=\sigma_{\pi}(y,\sigma_{\pi}(x,k))\pmod{m}.
\end{align*}

(b) If $\sigma_{\pi}(x,k)\equiv0\pmod{m}$ for all $x\in A$, then
\[
1=\varphi^k(xx^{-1})\equiv\varphi^k(x)\varphi^{\sigma_{\pi}(x,k)}(x^{-1})=\varphi^k(x)x^{-1},
\]
and so $\varphi^k(x)=x$, implying that $|\mathcal{O}_x|$ divides $k$. 
Since $m$ is equal to the least common multiple of $|\mathcal{O}_x|$, $x\in A$, we obtain  $m|k$.

Conversely, if $m|k$, then we may write $k=mk_1$. For any $x\in A$, we have $\sigma_{\pi}(x,m)\equiv0\pmod{m}$
by Proposition~\ref{Basic}(c), so
\[
\sigma_{\pi}(x,k)\equiv\sigma_{\pi}(x,mk_1)\equiv k_1\sigma_{\pi}(x,m)\equiv0\pmod{k}.
\]

(c) By Proposition~\ref{Basic}(c), we have
\begin{align*}
\sigma_{\pi}(x,mq+r)&=\sum_{i=1}^{mq+r}\pi(\varphi^{i-1}(x))=\sum_{i=1}^r\pi(\varphi^{i-1}(x))+\sum_{i=r+1}^{mq+r}\pi(\varphi^{i-1}(x))\\
&=\sigma_{\pi}(x,r)+\sum_{i=1}^{mq}\pi(\varphi^{i-1}(x))=\sigma_{\pi}(x,r)+q\sigma_{\pi}(x,m) \equiv \sigma_{\pi}(x,r) \pmod{n}.
\end{align*}

(d) If $k_1\equiv k_2\pmod{m}$, then from (b) we deduce $\sigma_{\pi}(x,k_1)\equiv\sigma_{\pi}(x,k_2)\pmod{m}$
for all $x\in A$. Conversely, if $\sigma_{\pi}(x,k_1)\equiv\sigma_{\pi}(x,k_2)\pmod{m}$
for all $x\in A$, then we may assume $k_1\leq k_2$, and denote $y=\varphi^{k_1}(x)$. Then $y$ runs through $A$, and
\[
\sigma_{\pi}(y,k_2-k_1)=\sigma_{\pi}(x,k_2)-\sigma_{\pi}(x,k_1)\equiv0\pmod{m}.
\]
Thus, by (b) we have $k_1\equiv k_2\pmod{m}$.
\end{proof}

\section{Extended power functions}\label{sec:epf}
In this section, we formally define the concept of an extended power function 
of a skew-morphism $\varphi$, and develop a theory for further applications.

Let  $G=A\langle c\rangle$ be a cyclic complementary extension of a finite group $A$
by a cyclic subgroup $\langle c\rangle$ of order $n$, and let $\varphi$  be 
the skew-morphism of $A$ and $\Pi:A\to\mathbb{Z}_{n}$ the associated
power function of $\varphi$ determined as in~\eqref{skew-def}. Then, 
for any positive integer $k$, by repeated use of
 the commuting rule~\eqref{Comm}, we have
 \[
 c^kx=\varphi^k(x)c^{\sigma_{\Pi}(x,k)},\quad\text{for any $x\in A$},
 \]
 where $\sigma_{\Pi}$ is the derived function of $\Pi$.
 It is evident that $\Pi(1_A)\equiv1\pmod{n}$, and $\Pi(x)\equiv\pi(x)\pmod{m}$ for all $x\in A$. Moreover, for any $x,y\in A$, we have
\[
\varphi(xy)c^{\Pi(xy)}=c(xy)=(cx)y=\varphi(x)c^{\Pi(x)}y=\varphi(x)\varphi^{\Pi(x)}(y)c^{\sigma_{\Pi}(y,\Pi(x))},
\]
so  $\Pi(xy)\equiv\sigma_{\Pi}(y,\Pi(x))\pmod{n}$.

Based on these observations, we define the extended power function of a skew-morphism 
$\varphi$ as a function that satisfies the above derived properties.
\begin{definition}\label{n-power}
Let $\varphi$ be a skew-morphism of a finite group $A$ with power function $\pi:A\to\mathbb{Z}_m$, where
$m=|\varphi|$. For a positive multiple $n$ of $m$,  a function $\Pi:A\to\mathbb{Z}_n$ will be called
an \textit{extended power function} of $\varphi$ if it satisfies the following conditions:
\begin{enumerate}[\rm(a)]
\item $\Pi(x)\equiv\pi(x)\pmod{m}$ for all $x\in A$,
\item $\Pi(1_A)\equiv1\pmod{n}$,
 \item$\Pi(xy)\equiv\sum_{i=1}^{\Pi(x)}\Pi(\varphi^{i-1}(y))\pmod{n}$ for all $x,y\in A$.
 \end{enumerate}
\end{definition}

By examining the definition of an extended power function, it becomes evident that this concept is a natural extension of the concept of a power function associated with $\varphi$. Therefore, it should come as no surprise that beside the properties listed in Definition~\ref{n-power}, the two exhibit numerous additional analogous properties. For instance, we can define the kernel and the core of $\Pi$, respectively, to be the subsets
 \[
\Ker\Pi=\{x\in A\mid \Pi(x)\equiv1\pmod{n}\}\quad\text{and}\quad \Core\Pi=\bigcap_{i=1}^m\varphi^{i-1}(\Ker\Pi).
\]
It is clear that both $\Ker\Pi$ and $\Core\Pi$ are subgroups of $A$. In particular,  $\Ker\Pi\leq\Ker\pi$ and $\Core\Pi\leq\Core\pi$.
Moreover, the following lemma shows that the functions $\Pi$ and $\sigma_{\Pi}$ have similar
properties as $\pi$ and $\sigma_{\pi}$.

\begin{proposition}\label{SIGMA2}
Let $\varphi$ be a skew-morphism of a group $A$ with the associated 
power function $\pi:A\to\mathbb{Z}_m$, where $m=|\varphi|$ and $n$ is a positive multiple of $m$, 
let $\Pi:A\to\mathbb{Z}_n$ be an extended power function of $\varphi$. Then the following 
statements hold true:
\begin{enumerate}[\rm(a)]
\item $\sigma_{\Pi}(xy,k)\equiv\sigma_{\Pi}(y,\sigma_{\Pi}(x,k))\pmod{n}$  for all $x,y\in A$.
\item $\sigma_{\Pi}(x,k)\equiv0\pmod{n}$ for all $x\in A$ if and only if $n$ divides $k$.
\item  $\sigma_{\Pi}(x,qm+r)\equiv q\sigma_{\Pi}(x,m)+\sigma_{\Pi}(x,r)\pmod{n}$ for all $x\in A$.
\item $\sigma_{\Pi}(x,k_1)\equiv\sigma_{\Pi}(x,k_2)\pmod{n}$ for all $x\in A$  if and only if $k_1\equiv k_2\pmod{n}$.
\end{enumerate}
\end{proposition}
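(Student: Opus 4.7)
The plan is to establish the four identities in the same spirit as the corresponding Proposition~\ref{SIGMA} for the ordinary power function, exploiting the defining property~(c) of an extended power function together with the compatibility condition $\Pi(x)\equiv\pi(x)\pmod{m}$, which allows us to transfer already-known mod-$m$ facts about $\sigma_{\pi}$ into the extended setting. The order of attack will be (a), (c), (b), (d), since (b) needs (c), and (d) will fall out of (b).

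I would begin with part~(a), as it sets up the basic bookkeeping technique. Starting from $\sigma_{\Pi}(xy,k)=\sum_{i=1}^{k}\Pi(\varphi^{i-1}(xy))$, I would invoke the identity $\varphi^{i-1}(xy)=\varphi^{i-1}(x)\varphi^{\sigma_{\Pi}(x,i-1)}(y)$, which follows from Proposition~\ref{Basic}(d), noting that $\varphi^{\sigma_{\Pi}(x,i-1)}=\varphi^{\sigma_{\pi}(x,i-1)}$ because $|\varphi|=m$ and $\sigma_{\Pi}\equiv\sigma_{\pi}\pmod{m}$. Applying Definition~\ref{n-power}(c) to each summand $\Pi(\varphi^{i-1}(x)\cdot\varphi^{\sigma_{\Pi}(x,i-1)}(y))$ and observing that, as $i$ ranges over $1,\ldots,k$ and $j$ over $1,\ldots,\Pi(\varphi^{i-1}(x))$, the indices $\sigma_{\Pi}(x,i-1)+j-1$ partition $\{0,1,\ldots,\sigma_{\Pi}(x,k)-1\}$, collapses the resulting double sum into $\sum_{\ell=0}^{\sigma_{\Pi}(x,k)-1}\Pi(\varphi^{\ell}(y))=\sigma_{\Pi}(y,\sigma_{\Pi}(x,k))$. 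Part~(c) is then immediate: the periodicity $\varphi^{j+m}=\varphi^{j}$ lets us split the range $1,\ldots,qm+r$ into $q$ complete $m$-cycles plus a tail of length $r$, giving $\sigma_{\Pi}(x,qm+r)=q\,\sigma_{\Pi}(x,m)+\sigma_{\Pi}(x,r)$ already as an equality in $\mathbb{Z}_n$.

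Part~(b) is where the main obstacle lies, and it is really the only step that uses information beyond the formal axioms of $\Pi$. For the forward direction I would specialize $x=1_A$: since $\varphi(1_A)=1_A$ and $\Pi(1_A)\equiv 1\pmod{n}$, one has $\sigma_{\Pi}(1_A,k)\equiv k\pmod{n}$, so the hypothesis forces $n\mid k$. For the converse, given $n\mid k$, the chain $m\mid n\mid k$ lets us write $k=qm$, and part~(c) reduces the claim to $q\,\sigma_{\Pi}(x,m)\equiv 0\pmod{n}$ for all $x$. The crux is then to establish $m\mid \sigma_{\Pi}(x,m)$: here I would exploit the compatibility $\Pi\equiv\pi\pmod{m}$, which combined with Proposition~\ref{Basic}(c) yields $\sigma_{\Pi}(x,m)\equiv\sigma_{\pi}(x,m)\equiv 0\pmod{m}$. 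Writing $\sigma_{\Pi}(x,m)=m\,a_{x}$, one obtains $q\,\sigma_{\Pi}(x,m)=qma_{x}=ka_{x}\equiv 0\pmod{n}$, as required. This short bridge between the mod-$m$ fact on $\sigma_{\pi}$ and the desired mod-$n$ identity on $\sigma_{\Pi}$ is the one conceptual, non-formal step of the whole proposition.

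Finally, part~(d) follows from (b) by range splitting. Assuming without loss of generality $k_1\leq k_2$, the identity $\sigma_{\Pi}(x,k_2)=\sigma_{\Pi}(x,k_1)+\sigma_{\Pi}(\varphi^{k_1}(x),k_2-k_1)$, obtained by cutting the summation at index $k_1$, converts the hypothesis $\sigma_{\Pi}(x,k_1)\equiv\sigma_{\Pi}(x,k_2)\pmod{n}$ for all $x$ into $\sigma_{\Pi}(y,k_2-k_1)\equiv 0\pmod{n}$ for all $y\in A$ (as $y=\varphi^{k_1}(x)$ ranges over $A$); by (b) this is equivalent to $n\mid(k_2-k_1)$, and the converse direction is the same identity read backwards together with (b).
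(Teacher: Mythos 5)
Your proposal is correct and follows essentially the same route as the paper: the double-sum collapse for (a), the periodicity splitting for (c), the bridge $\sigma_{\Pi}(x,m)\equiv\sigma_{\pi}(x,m)\equiv0\pmod{m}$ for the converse of (b), and the cut-at-$k_1$ reduction of (d) to (b). The only cosmetic difference is that for the forward direction of (b) you evaluate directly at $x=1_A$ where the paper routes through part (a) via $\sigma_{\Pi}(xx^{-1},k)$, but both arguments rest on the same fact $\sigma_{\Pi}(1_A,k)\equiv k\pmod{n}$.
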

\begin{proof}
(a) The proof is similar to that of Proposition~\ref{SIGMA}(a).

(b) By Proposition~\ref{SIGMA}(b), for any $x\in A$, we have $\sigma_{\Pi}(x,m)\equiv\sigma_{\pi}(x,m)\equiv0\pmod{m}$, so 
\[
\sigma_{\Pi}(x,n)\equiv\frac{n}{m}\sigma_{\Pi}(x,m)\equiv0\pmod{n}.
\]
If $n|k$, then for any $x\in A$, we have
\[
\sigma_{\Pi}(x,k)\equiv \frac{k}{n}\sigma_{\Pi}(x,n)\equiv0\pmod{n}.
\]

Conversely, if $\sigma_{\Pi}(x,k)\equiv0\pmod{n}$ for all $x\in A$, then by (a),
\[
k\equiv\sigma_{\Pi}(1,k)\equiv\sigma_{\Pi}(xx^{-1},k)\equiv\sigma_{\Pi}(x^{-1},\sigma_{\Pi}(x,k))\equiv0\pmod{n}.
\]

(c) Since $m=|\varphi|$, we have
\begin{align*}
\sigma_{\Pi}(x,qm+r)
&=\sum_{i=1}^{qm+r}\Pi(\varphi^{i-1}(x))=\sum_{i=1}^r\Pi(\varphi^{i-1}(x))+\sum_{i=r+1}^{qm+r}\Pi(\varphi^{i-1}(x))\\
&=\sum_{i=1}^r\Pi(\varphi^{i-1}(x))+\sum_{i=1}^{qm}\Pi(\varphi^{i-1}(x))
=\sum_{i=1}^r\Pi(\varphi^{i-1}(x))+q\sum_{i=1}^{m}\Pi(\varphi^{i-1}(x))\\
&=\sigma_{\Pi}(x,r)+q\sigma_{\Pi}(x,m)\pmod{n}.
\end{align*}

(d) This follows from (b) and (c).
\end{proof}

Furthermore, we can similarly establish the concepts of an average function and a mate function for the extended power function $\Pi:A\to\mathbb{Z}_n$ in the following manner. By Proposition~\ref{SIGMA}(b), we have $ \sigma_{\Pi}(x,m) \equiv 0 \pmod{m} $, for all $ x \in A $. Thus, we may define a function  $\Av:A\to\mathbb{Z}_{n/m}$   by 
 \begin{align}\label{Aver}
 \Av(x)=\frac{1}{m}\sigma_{\Pi}(x,m),\quad x\in A.
 \end{align}
Moreover, since $\Pi(x)\equiv\pi(x)\pmod{m}$, we may define a function $\Lambda:A\to\mathbb{Z}_{n/m}$ by
\[
\Lambda(x)=\frac{1}{m}(\Pi(x)-\pi(x)),\quad x\in A.
\]
The functions $\Av$ and $\Lambda$ shall be termed the \textit{average function} and the \textit{mate function} of $\Pi,$ respectively. The subsequent properties of $\Av$ and $\Lambda$ closely resemble those of $\av$ and $\lambda.$ The proof follows along the same lines as that of Proposition~\ref{av-lambda}.

\begin{proposition}\label{Av-mate}
Let $\varphi$ be a skew-morphism of a group $A$ with the associated power function $\pi:A\to\mathbb{Z}_m$,
where $m$ is the order of $\varphi$, and let $\Pi:A\to\mathbb{Z}_n$ be an extended power function
of $\varphi$, where $n$ is a positive multiple of $m$. Then the average function 
$\Av:A\to\mathbb{Z}_{n/m}$ and the mate function $\Lambda:A\to\mathbb{Z}_{n/m}$ 
of $\Pi$ have the following properties:
\begin{enumerate}[\rm(a)]
\item $\Av(x)\equiv\Av(\varphi(x))\pmod{n/m}$ for all $x\in A$.
\item $\Av$ is a homomorphism of $A$ into the multiplicative group $\mathbb{Z}_{n/m}^*.$
\item if $\varphi$ is an automorphism of $A$, then $\Lambda(xy)\equiv\Lambda(y)+\Lambda(x)\Av(y)\pmod{n/m}$
for all $x,y\in A$. In particular, if $\Av(x)\equiv1\pmod{n/m}$ for all $x\in A$, then
$\Lambda(xy)\equiv\Lambda(x)+\Lambda(y)\pmod{n/m}$ for all $x,y\in A$.
\end{enumerate}
\end{proposition}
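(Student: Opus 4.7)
The plan is to prove the three parts by mirroring the argument of Proposition~\ref{av-lambda} but with the extended power function $\Pi$ in place of $\pi$, exploiting the structural analogies already captured in Proposition~\ref{SIGMA2}. Throughout, the key identities are the cocycle rule $\sigma_{\Pi}(xy,k)\equiv\sigma_{\Pi}(y,\sigma_{\Pi}(x,k))\pmod{n}$ from Proposition~\ref{SIGMA2}(a) and the ``Euclidean division'' rule $\sigma_{\Pi}(x,qm+r)\equiv q\sigma_{\Pi}(x,m)+\sigma_{\Pi}(x,r)\pmod{n}$ from Proposition~\ref{SIGMA2}(c).

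For part (a), I would simply unpack the definition:
\[
\sigma_{\Pi}(\varphi(x),m)=\sum_{i=1}^{m}\Pi(\varphi^{i-1}(\varphi(x)))=\sum_{i=1}^{m}\Pi(\varphi^{i}(x)).
\]
Since $\varphi^{m}=\mathrm{id}$, the last sum is a cyclic rotation of $\sigma_{\Pi}(x,m)$, hence equal to it. Dividing both sides by $m$ yields $\Av(\varphi(x))\equiv\Av(x)\pmod{n/m}$.

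For part (b), I would apply Proposition~\ref{SIGMA2}(a) with $k=m$ to obtain $\sigma_{\Pi}(xy,m)\equiv\sigma_{\Pi}(y,\sigma_{\Pi}(x,m))\pmod{n}$. Writing $\sigma_{\Pi}(x,m)=m\,\Av(x)$ and applying Proposition~\ref{SIGMA2}(c) with $q=\Av(x)$ and $r=0$, the right-hand side collapses to $\Av(x)\cdot\sigma_{\Pi}(y,m)=m\,\Av(x)\Av(y)$. Dividing by $m$ gives $\Av(xy)\equiv\Av(x)\Av(y)\pmod{n/m}$. A direct computation from the definition shows $\Av(1_{A})=1$, so $\Av(x)\Av(x^{-1})\equiv 1\pmod{n/m}$, confirming that $\Av$ lands in $\mathbb{Z}_{n/m}^{*}$ and is a homomorphism.

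For part (c), the assumption that $\varphi$ is an automorphism gives $\pi(x)\equiv 1\pmod{m}$ for every $x\in A$, so the mate function simplifies to $\Lambda(x)=\tfrac{1}{m}(\Pi(x)-1)$, and in particular $\Pi(x)=1+m\Lambda(x)$. I would then invoke Definition~\ref{n-power}(c) to write $\Pi(xy)\equiv\sigma_{\Pi}(y,\Pi(x))=\sigma_{\Pi}(y,1+m\Lambda(x))\pmod{n}$. Applying Proposition~\ref{SIGMA2}(c) with $q=\Lambda(x)$ and $r=1$ gives
\[
\sigma_{\Pi}(y,1+m\Lambda(x))\equiv\Pi(y)+\Lambda(x)\sigma_{\Pi}(y,m)\equiv\Pi(y)+m\,\Lambda(x)\Av(y)\pmod{n}.
\]
Replacing $\Pi(xy)$ by $1+m\Lambda(xy)$ and $\Pi(y)$ by $1+m\Lambda(y)$, subtracting $1$, and dividing by $m$ yields the relation $\Lambda(xy)\equiv\Lambda(y)+\Lambda(x)\Av(y)\pmod{n/m}$. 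The special case $\Av\equiv 1$ is then immediate. Since each step is a direct invocation of Proposition~\ref{SIGMA2}, there is no substantive obstacle beyond careful bookkeeping; the only mildly delicate point is justifying that $\Lambda$ is well defined as a $\mathbb{Z}_{n/m}$-valued function, which uses exactly condition (a) in Definition~\ref{n-power} ensuring $m\mid\Pi(x)-\pi(x)$.
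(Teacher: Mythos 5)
Your proof is correct and follows essentially the same route the paper intends: the paper omits the proof of Proposition~\ref{Av-mate}, stating only that it ``follows along the same lines'' as Proposition~\ref{av-lambda}, and your argument is exactly that translation, with $\Pi$, $\Av$, $\Lambda$ and Proposition~\ref{SIGMA2} playing the roles of $\pi$, $\av$, $\lambda$ and Proposition~\ref{SIGMA}. Your part (c) in particular reproduces the computation given for Proposition~\ref{av-lambda}(c) with $p$ replaced by $m$ and $n$ in place of $m$, and your explicit arguments for (a) and (b) correctly supply what the paper delegates to the cited reference.
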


\section{Extended skew products}\label{sec:esp}
In this section, we leverage the theory of extended power functions
 developed in the preceding section to establish the sought-after relationship 
between cyclic complementary extensions and skew-morphisms along 
with their associated extended power functions.

The first two statements of the following result already appeared in \cite[Lemma 4.1]{CJT2016}.
\begin{proposition}\label{one-direction}
Let $ G = A \langle c \rangle $ be a cyclic complementary  extension of a finite group $A$
 with a cyclic group $\langle c \rangle $ of order $n$, and let $ \varphi $ and $\Pi$ be 
 defined by \eqref{skew-def}, and let $m$ denote the order of $ \varphi $. Then following hold true:
 \begin{enumerate}[\rm(a)]
 \item $ \langle c^m \rangle $ is a normal subgroup of $G$,
and the factor group $ G/ \langle c^m \rangle $ is isomorphic to the skew product $ A \langle \varphi \rangle $.
\item The subgroup $ A \langle c^m \rangle =A \ltimes \langle c^m \rangle $ is a semidirect 
product in which 
\[
 x^{-1}c^mx=(c^m)^{\sigma_{\Pi}(x,m)/m},\quad\text{for all $ x \in A $}.
 \]
 \item  $\Av$ is trivial (namely, $\Av(x)\equiv1\pmod{n/m}$ for all $x\in A$) if and only if $c^m$
is a central element of $G$.
 \end{enumerate}
\end{proposition}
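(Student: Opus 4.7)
The plan is to build on the iterated commuting rule $c^k x = \varphi^k(x)\, c^{\sigma_{\Pi}(x,k)}$, obtained by induction on $k$ from~\eqref{Comm}, and to exploit the key congruence $\sigma_{\Pi}(x,m)\equiv 0\pmod{m}$ for every $x\in A$; this follows from Definition~\ref{n-power}(a) combined with Proposition~\ref{Basic}(c), and guarantees that $c^{\sigma_{\Pi}(x,m)}$ lies in $\langle c^m\rangle$, so that the exponent $\sigma_{\Pi}(x,m)/m$ makes sense modulo $n/m$.

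For part (a), I would first specialize the iterated commuting rule to $k=m$ and use $\varphi^m=\id$ to obtain $x^{-1}c^m x = c^{\sigma_{\Pi}(x,m)}\in\langle c^m\rangle$. Combined with the obvious fact that $c$ normalizes $\langle c^m\rangle$, this shows $\langle c^m\rangle\trianglelefteq G$. The isomorphism $G/\langle c^m\rangle\cong A\langle\varphi\rangle$ is then obtained via the map $xc^i\langle c^m\rangle\mapsto x\varphi^i$; that this is a well-defined bijective homomorphism follows from reducing the multiplication in $G$ modulo $\langle c^m\rangle$ and comparing it with the skew-product multiplication~\eqref{skew-product}, using that $\Pi$ reduces to $\pi$ modulo $m$.

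For part (b), the equality $A\cap\langle c\rangle=\{1_G\}$ immediately forces $A\cap\langle c^m\rangle=\{1_G\}$; together with the normality established in (a), this yields the semidirect structure $A\langle c^m\rangle=A\ltimes\langle c^m\rangle$, and the explicit action $x^{-1}c^m x = (c^m)^{\sigma_{\Pi}(x,m)/m}$ is just the conjugation formula already derived, rewritten in terms of the generator $c^m$ of the normal factor.

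Part (c), which is the genuinely new contribution, uses the conjugation formula of (b) together with the definition $\Av(x)=\sigma_{\Pi}(x,m)/m$ from~\eqref{Aver}. An element $x\in A$ commutes with $c^m$ precisely when $(c^m)^{\Av(x)}=c^m$, i.e.\ when $\Av(x)\equiv 1\pmod{n/m}$. Since $c^m$ trivially commutes with $\langle c\rangle$, and $G$ is generated by $A\cup\langle c\rangle$, the centrality of $c^m$ in $G$ is equivalent to $\Av$ being identically $1$ on $A$. The only real bookkeeping obstacle sits in part (a), where one must carefully match the multiplication in $G/\langle c^m\rangle$ with~\eqref{skew-product}; parts (b) and (c) are then direct consequences of the single conjugation identity produced by the iterated commuting rule.
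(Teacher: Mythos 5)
Your proposal is correct and follows essentially the same route as the paper: the core of part (a) is the identical computation matching the multiplication in $G$ against the skew-product rule~\eqref{skew-product} via $\sigma_{\Pi}\equiv\sigma_{\pi}\pmod{m}$, and parts (b) and (c) read off the conjugation identity $c^mx=x(c^m)^{\sigma_{\Pi}(x,m)/m}$ exactly as in the paper. The only (immaterial) difference is that you prove normality of $\langle c^m\rangle$ by direct conjugation and then pass to the quotient, whereas the paper defines the surjective homomorphism $\Phi:xc^i\mapsto x\varphi^i$ first and obtains $\langle c^m\rangle$ as its kernel.
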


\begin{proof} (a) Let $ \Phi : A \langle c \rangle \to A \langle \varphi \rangle $ be the mapping defined by the 
formula $ \Phi(xc^i) = x\varphi^i $, for all $ x \in A $ and $ i \in {\Bbb Z}_n $. Then,
\[ 
\Phi(xc^iyc^j) = \Phi(x\varphi^i(y)c^{\sigma_{\Pi}(y,i)+j}) =x\varphi^i(y)\varphi^{\sigma_{\Pi}(y,i)+j}, 
\]
for all $ x,y \in A $ and $ i,j \in {\Bbb Z}_n $ (with the multiplication performed in 
$ A \langle c \rangle $), while
\[ 
\Phi(xc^i)\Phi(yc^j) = x\varphi^iy\varphi^j = x \varphi^i(y) \varphi^{\sigma_{\pi}(y,i)+j} ,
\]
for all $ x,y \in A $ and $ i,j \in {\Bbb Z}_n $ (with the multiplication performed in 
$ A \langle \varphi \rangle $). It follows that $ \Phi $ is a homomorphism from 
$ A \langle c \rangle $ to $ A \langle \varphi \rangle $ mapping $c$ to $\varphi$.
Since the order $m$ of $ \varphi $ divides the order $ n $ of $c$, $\Phi$ is surjective. 
Consequently, $ \Phi(c^m) = \varphi^m=
1_{A \langle \varphi \rangle} $, the kernel of $ \Phi $ is the group $ \langle c^m \rangle $ of order 
$ \frac{n}{m} $, and  $ A \langle \varphi \rangle \cong  A \langle c\rangle / \langle c^m \rangle $.

(b) Since $ \langle c^m \rangle $ is normal in $G$ and $ A \cap \langle c^m \rangle = 
\langle 1_G \rangle $, the product $ A \langle c^m \rangle $ is a semidirect product
$ A \ltimes \langle c^m \rangle $, and for all $x\in A$, we have
\[
c^mx = \varphi^m(x) c^{\sigma_{\Pi}(x,m)} = x (c^m)^{\sigma_{\Pi}(x,m)/m}.
\]

(c)  From (b) we see that $c^mx=xc^{m\Av(x)}$ for all $x\in A$. Thus, $c^m$ is central in $G$
if and only if $\Av(x)\equiv1\pmod{n/m}$ for all $x\in A$; that is, $\Av$ is trivial. 
\end{proof}

We are now ready to prove Theorem~\ref{main-thm1}, the first main result of this paper.
\begin{proof}[Proof of Theorem~\ref{main-thm1}]
Suppose that $G=A\langle c\rangle$ represents a cyclic complementary extension of $A$ 
by a cyclic group $\langle c\rangle$ of order $n$. In a manner akin to the outset of 
Section 3, it is possible to deduce, from the identity~\eqref{Comm}, a pair $(\varphi,\Pi)$ 
comprising a skew-morphism $\varphi$ of $A$ and an extended power function $\Pi:A\to\mathbb{Z}_n$.

Conversely, suppose that $\varphi$ is a skew-morphism of $A$, and $\Pi:A\to\mathbb{Z}_n$
is an extended power function of $\varphi$, where $n$ is a positive multiple of  the order  $m$ of $\varphi$. 
Define a binary operation $ \ast$ on the set $G:=A\times C$ via the formula:
\begin{equation}\label{def}
(x,c^i) \ast (y,c^j) = (x\varphi^i(y), c^{\sigma_{\Pi}(y,i)+j}),\quad\text{$x,y\in A $ and $ i,j \in {\Bbb Z}_n $.}
\end{equation}
It is clear that $\ast$ is a well-defined binary operation on the set 
$ A \times \langle c \rangle $. To show that $A\times\langle c\rangle$ is a group, it
suffices to verify the associativity of the operation,
the existence of a left identity element and the existence of 
a left inverse for any element in $A\times\langle c\rangle$.

Let us begin with the associativity. Let $ x,y,z \in A $ and $ i,j,k \in {\Bbb Z}_n $. Then,
\begin{align*}
((x,c^i) \ast (y,c^j)) \ast (z,c^k) &= (x \varphi^i(y), c^{\sigma_{\Pi}(y,i)+j}) \ast (z,c^k) \\
&= (x \varphi^i(y) \varphi^{\sigma_{\Pi}(y,i)+j}(z), c^{\sigma_{\Pi}(z,\sigma_{\Pi}(y,i)+j)+k}),
\end{align*}
while
\begin{align*}
(x,c^i) \ast((y,c^j) \ast (z,c^k)) 
&= (x,c^i) \ast (y \varphi^j(z),c^{\sigma_{\Pi}(z,j)+k}) \\
&= (x \varphi^i(y \varphi^j(z)), c^{\sigma_{\Pi}(y \varphi^j(z),i)+\sigma_{\Pi}(z,j)+k}) \\
&= (x \varphi^i(y) \varphi^{\sigma_{\Pi}(y,i)+j}(z), c^{\sigma_{\Pi}(y \varphi^j(z),i)+\sigma_{\Pi}(z,j)+k}) .
\end{align*}
To complete the proof of associativity we need to prove the congruence 
\[
 \sigma_{\Pi}(z,\sigma_{\Pi}(y,i)+j)+k\equiv \sigma_{\Pi}(y \varphi^j(z),i)+\sigma_{\Pi}(z,j)+k  \pmod{n} .
\] 
Using Proposition~\ref{SIGMA2}(a), we have
\begin{align*}
\sigma_{\Pi}(y \varphi^j(z),i)+\sigma_{\Pi}(z,j)+k 
&= \sigma_{\Pi}(\varphi^j(z),\sigma_{\Pi}(y,i))+\sigma_{\Pi}(z,j)+k \\
&= \sum_{s=1}^{\sigma_{\Pi}(y,i)} \Pi(\varphi^{j+s-1}(z))+\sum_{t=1}^j \Pi(\varphi^{t-1}(z)) + k \\
&=\sum_{t=j+1}^{\sigma_{\Pi}(y,i)+j} \Pi(\varphi^{t-1}(z))+\sum_{t=1}^j \Pi(\varphi^{t-1}(z)) + k\\
&=\sum_{t=1}^{\sigma_{\Pi}(y,i)+j} \Pi(\varphi^{t-1}(z)) + k \\
&= \sigma_{\Pi}(z,\sigma_{\Pi}(y,i)+j)+k \pmod{n}.
\end{align*}

Moreover, for an arbitrary element  $ (x,c^i) $ of $A \times \langle c \rangle$, we have
\[
 (1_A,c^0) \ast (x,c^i) = (1_A \varphi^0(x), c^{\sigma_{\Pi}(x,0)+i}) = (x,c^i),
 \]
and
 \begin{align*}
(\varphi^{-i}(x^{-1}),c^{\sigma_{\Pi}(x^{-1},n-i)}) \ast (x,c^i)
&= (\varphi^{-i}(x^{-1})\varphi^{\sigma_{\Pi}(x^{-1},n-i)}(x), c^{\sigma_{\Pi}(x,\sigma_{\Pi}(x^{-1},n-i))+i} ) \\
&=(\varphi^{n-i}(x^{-1})\varphi^{\sigma_{\Pi}(x^{-1},n-i)}(x), c^{\sigma_{\Pi}(xx^{-1},n-i)+i} )\\
&=(\varphi^{n-i}(x^{-1}x), c^{n-i+i} ) \\
&=(\varphi^{n-i}(1_A),c^0)=(1_A, c^0 ),
\end{align*}
so the binary operation $ \ast $ has a left identity $ (1_A,c^0) $, and the element
 $(x,c^i)$ has a left inverse $(\varphi^{-i}(x^{-1}),c^{\sigma_{\Pi}(x^{-1},n-i)})$. 
 Therefore, $A\times\langle c\rangle$ is group with respect to the operation.
 
To prove the second part of the theorem, suppose first that there exists an automorphism $\theta$ of $A$ such that $\varphi_2=\theta\varphi\theta^{-1}$ 
and $\Pi_2=\Pi_1\theta$. Define a mapping 
\[
\Theta: A\langle c_1\rangle\to A\langle c_2\rangle, xc_1^i\mapsto \theta(x)c_2^i.
\]
It is evident that  $\Theta$ is surjective with $\Theta(A)=A$ and $\Theta(c_1)=c_2$.
Now for any $x,y\in A$ and $i,j\in\mathbb{Z}_n$, we have
\begin{align*}
\Theta(xc_1^iyc_1^j)&=\Theta(x\varphi_1^i(y)c_1^{\sigma_{\Pi_1}(y,i)+j})\\
&=\theta(x\varphi_1^i(y))c_2^{\sigma_{\Pi_1}(y,i)+j}\\
&=\theta(x)(\theta\varphi_1\theta^{-1})^i(\theta(y))c_2^{\sigma_{\Pi_1}(y,i)+j}\\
&=\theta(x)\varphi_2^{i}(\theta(y))c_2^{\sigma_{\Pi_1}(y,i)+j}\\
&=\theta(x)\varphi_2^{i}(\theta(y))c_2^{\sigma_{\Pi_2}(\theta(y),i)+j}\\
&=\theta(x)c_2^i\theta(y)c_2^j=\Theta(xc_1^i)\Theta(yc_1^j).
\end{align*}
 Therefore, $\Theta$ is the required isomorphism. 

Conversely, suppose that  there exists an isomorphism $\Theta:A\langle c_1\rangle\to A\langle c_2\rangle$ 
such that $\Theta(A)=A$ and  $\Theta(c_1)=c_2$. Then the restriction $\theta=\Theta|_A$ is an automorphism
of $A$. For all $x\in A$, we have
\begin{align*}
\Theta(\varphi_1(x)c_1^{\Pi_1(x)})
&=\Theta(c_1x)=\Theta(c_1)\Theta(x)=c_2\theta(x)=\varphi_2\theta(x)c_2^{\Pi_2(\theta(x))}
=\Theta\big(\theta^{-1}\varphi_2\theta(x)c_2^{\Pi_2(\theta(x))}\big),
\end{align*}
so $\varphi_1(x)=\theta^{-1}\varphi_2\theta(x)$ and $\Pi_1(x)\equiv\Pi_2(\theta(x))\pmod{n}$, as required.
\end{proof}

The cyclic complementary extension $G$ of $A$, which we construct in the proof of 
Theorem~\ref{main-thm1} using a skew-morphism $\varphi$ and an extended power function 
$\Pi$ of $\varphi$, will be termed the \textit{extended skew product} of $A$ determined by 
$\varphi$ and $\Pi$, and it will be denoted by $\Ext(A,\varphi,\Pi)$.
For simplicity, we shall represent the element $(x,c^i)$ of $\Ext(A,\varphi,\Pi)$ as $xc^i$, and express 
the multiplication as follows:
 \[
 xc^i yc^j=x\varphi^i(y)c^{\sigma_{\Pi}(y,i)+j},\quad \text{for all $x,y\in A$ and $i,j\in\mathbb{Z}_n$}.
 \]

Consider a skew-morphism $\varphi$ of $A$, and let $\Pi:A\to\mathbb{Z}_n$ be an extended power 
function of $\varphi$. Due to the second part of Theorem~\ref{main-thm1}, it follows that for any automorphism 
$\theta$ of $A$, the permutation $\varphi':=\theta\varphi\theta^{-1}$ is also a skew-morphism of $A$, 
and $\Pi':=\Pi\theta^{-1}$ is  an extended power function of $\varphi'$. Consequently, the automorphism 
group $\Aut(A)$ of $A$ acts on the set $\Skew(A)$ of all skew-morphisms of $A$ 
via conjugation. Two skew-morphisms of $A$ are considered \textit{equivalent} if they fall within the same orbit of the above
 action. Inspired by Theorem~\ref{main-thm1}, we refer to two cyclic complementary extensions, $A\langle c_1\rangle$ and $A\langle c_2\rangle$, as \textit{equivalent} if there exists an isomorphism $\Theta$ from $A\langle c_1\rangle$ to $A\langle c_2\rangle$ such that $\Theta(A)=A$ and $\Theta(c_1)=c_2.$
It is worth noting that this particular case of bijective correspondence between equivalence classes of skew-product groups and equivalence classes of skew-morphisms was previously established in~\cite[Proposition~3.2.9]{Bachraty2020}.

\section{Application}\label{sec:app}
In this section, as an application of the theory developed in the previous sections, we 
determine the cyclic complementary extensions of cyclic groups corresponding to automorphisms. 

According to Theorem~\ref{main-thm1}, when dealing with an automorphism $\varphi$ of a cyclic group
 $A$ of order $k$ (expressed additively as $\mathbb{Z}_n$ in the following discussion), the key 
 to determining the corresponding cyclic complementary extensions of $A$ is to establish the 
 extended power functions of $\varphi$. To achieve this, we rely on a technical result involving 
 a function $\tau:R\times\mathbb{N}\to R$, where $R$ represents a ring with an identity. 
 This function is defined as follows:
\[
\tau(t,0)=0,\quad\text{and}\quad \tau(t,k)=\sum_{i=1}^kt^{i-1},\quad k>0.
\]
\begin{lemma}\cite[Lemma 2]{HR2022}\label{TECH}
The function $\tau(t,x)$ satisfies the following equalities:
\begin{align}\label{GEOM}
\tau(t,x+y)=\tau(t,x)+t^x\tau(t,y)\quad\text{and}\quad\tau(t,xy)=\tau(t,x)\tau(t^x,y),\quad x,y\in\mathbb{N}.
\end{align}
\end{lemma}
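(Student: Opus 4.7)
The plan is to establish both identities by elementary manipulations of the defining sum $\tau(t,k) = \sum_{i=1}^{k} t^{i-1}$; no structural lemma from the rest of the paper is required, since $\tau$ is just a formal shorthand for a geometric-type sum in the ring $R$.

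For the first identity $\tau(t,x+y) = \tau(t,x) + t^x \tau(t,y)$, I would simply split the defining sum at index $i = x$:
\[
\tau(t,x+y) = \sum_{i=1}^{x+y} t^{i-1} = \sum_{i=1}^{x} t^{i-1} + \sum_{i=x+1}^{x+y} t^{i-1}.
\]
The first part is $\tau(t,x)$ by definition, while the substitution $j = i - x$ in the second part yields $\sum_{j=1}^{y} t^{x+j-1} = t^{x}\sum_{j=1}^{y} t^{j-1} = t^{x}\tau(t,y)$.

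For the second identity $\tau(t,xy) = \tau(t,x)\,\tau(t^x,y)$, I would group the indices $1,2,\ldots,xy$ into $y$ consecutive blocks of length $x$, writing each index uniquely as $jx + k + 1$ with $0 \le j < y$ and $0 \le k < x$:
\[
\tau(t,xy) = \sum_{j=0}^{y-1}\sum_{k=0}^{x-1} t^{jx+k} = \Bigl(\sum_{k=0}^{x-1} t^{k}\Bigr)\Bigl(\sum_{j=0}^{y-1} (t^{x})^{j}\Bigr) = \tau(t,x)\,\tau(t^{x},y),
\]
where the middle equality uses the distributivity in $R$ together with $t^{jx+k} = (t^{x})^{j}\,t^{k}$. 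A completely equivalent option is to fix $x$ and induct on $y$: the base case $y=0$ is immediate from $\tau(t,0)=0$, and the inductive step follows by applying the first identity as $\tau(t,\,xy+x) = \tau(t,xy) + t^{xy}\tau(t,x)$, combined with the one-line recursion $\tau(t^{x},y+1) = \tau(t^{x},y) + (t^{x})^{y}$ (itself a special case of the first identity applied to the element $t^{x} \in R$ with $y$ and $1$).

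No step presents any real obstacle; both identities are purely formal consequences of the definition of $\tau$. The only mild subtlety worth flagging is that, because $R$ is only assumed to be a ring with identity and $t-1$ need not be a unit, one cannot collapse $\tau(t,k)$ to the closed form $(t^{k}-1)/(t-1)$ and read both identities off of it. The arguments above work entirely at the level of finite sums and avoid any division, so they hold verbatim in an arbitrary unital ring.
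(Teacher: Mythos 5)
Your proof is correct: both identities follow exactly as you describe from splitting the defining sum at index $x$ and from regrouping the $xy$ terms into $y$ blocks of length $x$, and your remark about avoiding the closed form $(t^k-1)/(t-1)$ in a general unital ring is well taken. Note that the paper itself gives no proof of this lemma --- it is quoted from \cite[Lemma 2]{HR2022} --- so there is nothing internal to compare against; your argument is the standard elementary verification one would expect there.
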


The extended power  functions of automorphisms of $\mathbb{Z}_k$
are determined in the following.
 
\begin{lemma}\label{ext-auto}
Let $\varphi: x\mapsto xr$ be an automorphism of $\mathbb{Z}_k$ 
determined by the number  $r\in\mathbb{Z}_k^*$, and let $n$ be a
 positive multiple  of  the multiplicative order $m$ of $r$ in $\mathbb{Z}_k$. 
Then the  extended power functions $\Pi:\mathbb{Z}_k\to\mathbb{Z}_n$  of $\varphi$  are  determined by
the formula
\begin{equation}\label{nth-EPF}
\Pi_{r,s,t}(x)\equiv1+ms\sum_{i=1}^xt^{i-1}\pmod{n},\quad x\in\mathbb{Z}_k,
\end{equation}
where the numbers $s\in\mathbb{Z}_{n/m}$ and $t\in\mathbb{Z}_{n/m}^*$ satisfy the conditions:
\begin{enumerate}[\rm(a)]
\item $t^{r-1}\equiv 1\pmod{n/m}$,
\item $s\sum\limits_{i=1}^kt^{i-1}\equiv0\pmod{n/m}$,
\item $s\sum\limits_{i=1}^m(\sum\limits_{j=1}^rt^{j-1})^{i-1}\equiv t-1\pmod{n/m}$.
\end{enumerate}
Moreover, for fixed $r\in\mathbb{Z}_k^*$, $\Pi_{r,s,t}=\Pi_{r,s',t'}$ if and only if
\begin{equation}\label{st-eqn}
s\equiv s'\pmod{n/m}\quad\text{and}\quad s(t-t')\equiv0\pmod{n/m}.
\end{equation}
\end{lemma}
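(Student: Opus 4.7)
The plan is to parametrize the extended power functions of the automorphism $\varphi:x\mapsto rx$ of $\mathbb{Z}_k$ by analysing the associated mate function $\Lambda$ and average function $\Av$ from Section~\ref{sec:epf}. Since $\varphi$ is an automorphism, $\pi(x)\equiv 1\pmod m$ for all $x$, so every extended power function decomposes uniquely as $\Pi(x)=1+m\Lambda(x)$ with $\Lambda:\mathbb{Z}_k\to\mathbb{Z}_{n/m}$. Proposition~\ref{Av-mate}(c) translates property~(c) of Definition~\ref{n-power} into the twisted cocycle identity $\Lambda(x+y)\equiv\Lambda(y)+\Lambda(x)\Av(y)\pmod{n/m}$, while Proposition~\ref{Av-mate}(b) identifies $\Av$ as a homomorphism from $(\mathbb{Z}_k,+)$ into $\mathbb{Z}_{n/m}^*$, hence of the form $\Av(x)\equiv t^x$ where $t:=\Av(1)$.

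For necessity, setting $s:=\Lambda(1)$, the cocycle identity specializes to the recurrence $\Lambda(x+1)\equiv s+t\Lambda(x)\pmod{n/m}$, which by induction (matching the identity $\tau(t,x+1)=1+t\tau(t,x)$ from Lemma~\ref{TECH}) yields $\Lambda(x)\equiv s\tau(t,x)$, establishing formula~\eqref{nth-EPF}. Comparing $\Av(\varphi(1))=\Av(1)=t$ (Proposition~\ref{Av-mate}(a)) with the homomorphism value $\Av(r)=t^r$ forces $t^{r-1}\equiv 1\pmod{n/m}$, which is condition~(a). Well-definedness of $\Lambda$ on $\mathbb{Z}_k$ forces $\Lambda(k)\equiv 0$, which is condition~(b). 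For condition~(c), observe that (a) gives $t^{r^j}=t$ for all $j\geq0$, so a short induction using Lemma~\ref{TECH} gives $\Lambda(r^j)\equiv s\tau(t,r)^j$; the defining equation $t=\Av(1)=1+\sum_{j=0}^{m-1}\Lambda(r^j)$ then rearranges to $s\tau(\tau(t,r),m)\equiv t-1$, which is (c).

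For sufficiency, given $(s,t)$ satisfying (a)--(c), I would define $\Pi_{r,s,t}(x):=1+ms\tau(t,x)\pmod n$ and verify the three defining properties of an extended power function. Properties~(a) and~(b) of Definition~\ref{n-power} are immediate from the formula, and the well-definedness of $\Pi_{r,s,t}$ on $\mathbb{Z}_k$ follows from condition~(b) via Lemma~\ref{TECH}. The main obstacle is verifying property~(c) of Definition~\ref{n-power}: after subtracting $\Pi_{r,s,t}(x)$ from both sides, dividing by $m$, and using Lemma~\ref{TECH} to expand $\sum_{i=1}^{\Pi_{r,s,t}(x)}\tau(t,r^{i-1}y)$ via the factorization $\tau(t,r^{i-1}y)=\tau(t,y)\tau(t^y,r^{i-1})$, the verification reduces to a manipulation that exploits conditions~(a) and~(c) together with the orbit structure of $\varphi$ on $\{r^{i-1}\pmod k\mid 1\leq i\leq\Pi_{r,s,t}(x)\}$. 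Alternatively, and perhaps more cleanly, one may construct the group with the presentation given in Theorem~\ref{main-thm2}, verify it is a cyclic complementary extension with $|A|=k$ and $|C|=n$, and invoke Theorem~\ref{main-thm1} to conclude that $\Pi_{r,s,t}$ is an extended power function.

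Finally, for the parametrization statement, evaluating $\Pi_{r,s,t}=\Pi_{r,s',t'}$ at $x=1$ yields $ms\equiv ms'\pmod n$, so $s\equiv s'\pmod{n/m}$; at $x=2$, using $\tau(t,2)=1+t$ and the previous congruence, one obtains $s(t-t')\equiv 0\pmod{n/m}$. Conversely, assuming both relations, the elementary factorization $t^{i-1}-(t')^{i-1}=(t-t')\sum_{\ell=0}^{i-2}t^\ell(t')^{i-2-\ell}$ makes $ms(\tau(t,x)-\tau(t',x))$ divisible by $ms(t-t')\equiv 0\pmod n$, whence $\Pi_{r,s,t}=\Pi_{r,s',t'}$ on all of $\mathbb{Z}_k$.
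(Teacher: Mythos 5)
Your necessity argument and the final equivalence criterion follow the paper's proof essentially verbatim: you decompose $\Pi(x)=1+m\Lambda(x)$, use Proposition~\ref{Av-mate} to get $\Av(x)\equiv t^x$ and the recurrence for $\Lambda$, derive $\Lambda(x)\equiv s\tau(t,x)$ by induction, and extract conditions (a)--(c) from $\Av(\varphi(1))=\Av(1)$, $\Pi(k)=\Pi(0)$, and $\Av(1)=\frac1m\sigma_\Pi(1,m)$, exactly as the paper does. The computation for \eqref{st-eqn} at $x=1,2$ and the converse via the factorization of $t^{i-1}-(t')^{i-1}$ are also correct.

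The gap is in the sufficiency direction, which you yourself flag as ``the main obstacle'' and then leave as a description of a manipulation rather than carrying it out. The missing computational step is the identity
\[
\frac{1}{m}\sigma_{\Pi}(x,m)\equiv 1+s\tau(t,x)\sum_{i=1}^{m}\tau(t,r)^{i-1}\equiv 1+(t-1)\tau(t,x)\equiv t^{x}\pmod{n/m},
\]
where the middle congruence is condition (c) and the last is the telescoping $1+(t-1)(1+t+\cdots+t^{x-1})=t^x$; once this is in hand, Definition~\ref{n-power}(c) follows from $\tau(t,x+y)=\tau(t,y)+t^{y}\tau(t,x)$ and $\sigma_{\Pi}(y,1+ms\tau(t,x))\equiv\sigma_{\Pi}(y,1)+s\tau(t,x)\,\sigma_{\Pi}(y,m)$ (the analogue of Proposition~\ref{SIGMA2}(c), which needs only $\varphi^m=\mathrm{id}$). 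Note also that your chosen direction of the factorization, $\tau(t,r^{i-1}y)=\tau(t,y)\tau(t^{y},r^{i-1})$, leaves the awkward base $t^{y}$; the clean route is $\tau(t,r^{i-1}y)=\tau(t,r^{i-1})\tau(t^{r^{i-1}},y)=\tau(t,r)^{i-1}\tau(t,y)$, using condition (a) to reduce $t^{r^{i-1}}$ to $t$. Finally, the proposed fallback --- constructing the group from the presentation in Theorem~\ref{main-thm2} and invoking Theorem~\ref{main-thm1} --- is circular, since Theorem~\ref{main-thm2} is deduced from this lemma; and even ignoring that, one would still have to prove that the presented group has order $kn$ and splits as $A\langle c\rangle$ with $A\cap\langle c\rangle$ trivial, which amounts to the same verification you are trying to avoid.
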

\begin{proof}
Suppose that $\Pi:\mathbb{Z}_k\to\mathbb{Z}_n$ is an extended power function of an automorphism 
$\varphi$ of $\mathbb{Z}_k$ taking $x\mapsto rx$, where $r\in\mathbb{Z}_k^*$ and $n$
is positive multiple of the multiplicative order $m$ of $r$ in $\mathbb{Z}_k$. By 
Proposition~\ref{Av-mate}(b)-(c), the function $\Av:\mathbb{Z}_k\to\mathbb{Z}_{n/m}$
 is a homomorphism from the additive group $\mathbb{Z}_k$ into the  multiplicative 
 group $\mathbb{Z}_{n/m}^*,$  and   the function $\Lambda:\mathbb{Z}_k\to\mathbb{Z}_{n/m}$ 
satisfies the equation 
\[
\Lambda(x+y)=\Lambda(x)+\Lambda(y)\Av(x)\pmod{n/m},\quad\text{$x,y\in\mathbb{Z}_k$}.
\] 
Set $s=\Lambda(1)\in\mathbb{Z}_{n/m}$ and  $t=\Av(1)\in\mathbb{Z}_{n/m}^*$. Then 
\[
\Av(x)\equiv \Av(1)^x=t^x\pmod{n/m},\quad \text{for all $x\in\mathbb{Z}_k$}.
\]
 By induction we obtain
\begin{align*}
\Lambda(x)&=\Lambda((x-1)+1)\equiv\Lambda(x-1)+\Lambda(1)\Av(x-1)\\
&\equiv s\sum_{i=1}^{x-1}t^{i-1}+st^{x-1}=s\sum_{i=1}^xt^{i-1}=s\tau(t,x)\pmod{n/m}.
\end{align*}
Therefore, $\Pi(x)\equiv \pi(x)+m\Lambda(x)\equiv1+m\Lambda(x)=1+ms\tau(t,x)\pmod{n}.$

The numerical conditions can be easily derived  as follows:
\begin{align*}
t&\equiv\Av(1)\equiv\Av(\varphi(1))\equiv\Av(r)\equiv t^r\pmod{n/m},\\
1&\equiv\Pi(0)\equiv\Pi(k)\equiv1+ms\sum_{i=1}^kt^{i-1}\pmod{n},\\
t&\equiv\Av(1)\equiv\frac{1}{m}\sum_{i=1}^m\Pi(\varphi^{i-1}(1))
\equiv\frac{1}{m}\sum_{i=1}^m\Pi(r^{i-1})\\
&\equiv\frac{1}{m}\sum_{i=1}^m\big(1+ms\tau(t,r^{i-1})\big)\stackrel{\eqref{GEOM}}
\equiv\frac{1}{m}\sum_{i=1}^m\big(1+ms\tau(t,r)^{i-1}\big)\\
&\equiv 1+s\sum_{i=1}^m\tau(t,r)^{i-1}\pmod{n/m}.
\end{align*}

It remains to verify that $\Pi_{r,s,t}$ is indeed an extended power function of $\varphi$,
provided the required numerical conditions are satisfied. It suffices to
verify that it fulfills the conditions (a)--(c) in Definition~\ref{n-power}. The first two
conditions are clearly satisfied. In what follows, we verify the last one. Write $\Pi:=\Pi_{r,s,t}$. 

First observe that, since $|\varphi|=m$, we have 
\[
\sigma_{\Pi}(x,qm+r)\equiv q\sigma_{\Pi}(x,m)+\sigma_{\Pi}(x,r)\pmod{n}, \quad x\in\mathbb{Z}_k.
\]
 Then, using Lemma~\ref{TECH} and the numerical conditions (a) and (c), we have 
\begin{align*}
\frac{1}{m}\sigma_{\Pi}(x,m)
&\equiv\frac{1}{m}\sum_{i=1}^m\Pi(\varphi^{i-1}(x))\equiv\frac{1}{m}\sum_{i=1}^m\Pi(xr^{i-1})\\
&\equiv \frac{1}{m}\sum_{i=1}^m\big(1+ms\tau(t,xr^{i-1})\big)\stackrel{\eqref{GEOM}}\equiv\frac{1}{m}\sum_{i=1}^m\big(1+ms\tau(t,x)\tau(t,r)^{i-1}\big)\\
&\equiv 1+s\tau(t,x)\sum_{i=1}^m\tau(t,r)^{i-1}\equiv 1+\tau(t,x)(t-1)\equiv t^x\pmod{n/m}.
\end{align*}
Thus, for any $x,y\in\mathbb{Z}_k$, we have
\begin{align*}
\Pi(x+y)&\equiv1+ms\tau(t,x+y)\equiv1+ms\big(\tau(t,y)+\tau(t,x)t^y\big)\\
&\equiv\sigma_{\Pi}(y,1)+s\tau(t,x)\sigma_{\Pi}(y,m)\equiv \sigma_{\Pi}(y,1+ms\tau(t,x))\\
&\equiv\sigma_{\Pi}(y,\Pi(x))\pmod{n}.
\end{align*}
Hence, the last condition is also fulfilled. Consequently,
$\Pi_{r,s,t}$ is an extended power function of $\varphi$.

Finally, for fixed $r\in\mathbb{Z}_k^*$, if $\Pi_{r,s,t}(x)=\Pi_{r,s',t'}(x)$ for any $x\in\mathbb{Z}_k$, then 
\[
1+ms\sum_{i=1}^xt^{i-1}\equiv\Pi_{r,s,t}(x)\equiv\Pi_{r,s',t'}(x)\equiv1+ms'\sum_{i=1}^x{t'}^{i-1}\pmod{n},
\]
and so 
\[
s\sum_{i=1}^xt^{i-1}\equiv s'\sum_{i=1}^x{t'}^{i-1}\pmod{n/m}.
\]
 Putting $x=1$ and $x=2$, we get~\eqref{st-eqn}. 
Conversely, if the conditions in~\eqref{st-eqn} are satisfied, it is evident that $\Pi_{r,s,t}=\Pi_{r,s',t'}$,
 as required.
\end{proof}
We are finally ready to prove Theorem~\ref{main-thm2}, the second main result of our paper.
\begin{proof}[Proof of Theorem~\ref{main-thm2}]
Note that every automorphism of the cyclic group  $A:=\langle a|a^k=1\rangle$
 is determined by an assignment of the form $a^x\mapsto a^{rx}$, where $r\in\mathbb{Z}_k^*$.
 Let $\varphi:x\mapsto rx$ be the corresponding automorphism of the additive group $\mathbb{Z}_k$
 which is isomorphic to $A$, let $m$ denote the order of $\varphi$, which is equal to 
 the multiplicative order of $r$ in $\mathbb{Z}_k$, and let $n$ be 
 a positive multiple of $m$. By Theorem~\ref{main-thm1}, every cyclic complementary extension $G$
corresponding to $\varphi$ can be constructed from the identity
\[
ca^x=a^{\varphi(x)}c^{\Pi(x)},\quad x\in\mathbb{Z}_k,
\]
where $c$ is a generator of a cyclic group $C:=\langle c|c^n=1\rangle$ of order $n$, and $\Pi:\mathbb{Z}_k\to\mathbb{Z}_{n/m}$ is an extended power function of $\varphi$ determined in Theorem~\ref{main-thm2}. Thus, $G=\langle a,c\rangle$ and we have the following relations in $G$: 
\[
a^k=c^n=1,\quad c^ma=a^{m\Av(1)}=a^{mt},\quad ca=a^{\varphi(1)}c^{\Pi(1)}=a^rc^{1+ms}.
\]
It is apparent that these relations are sufficient to define the required group $G$.
\end{proof}

\begin{example}The assignment $\varphi:a\mapsto a^3$ extends to
an automorphism of the cyclic group $A=\langle a|a^8=1\rangle$. 
The cyclic complementary extensions of $A$ by another cyclic group $C=\langle c|c^8=1\rangle$ 
of order $8$ corresponding to the automorphism $\varphi$ are summarized in Table~\ref{TAB}, where
$(r,s,t)$ are all the triples satisfying the three conditions (a)--(c) stated in
Theorem~\ref{ext-auto}. 
\medskip

\begin{center}\label{1}
\begin{threeparttable}[b]
\caption{Cyclic complementary extensions of $A=\langle a|a^8=1\rangle$}\label{TAB}
\begin{tabular*}{140mm}[c]{p{13mm}p{27mm}p{14mm}p{71mm}}
\toprule
$(r,s,t)$    & $\Pi(x)\pmod{8}$                            &$\varphi$    &$A\langle c\rangle$                        \\
\hline
  $(3,0,1)$ &$1$ &  $a\mapsto a^3$  & $\langle a,c\mid a^8=c^8=1, a^c=a^3\rangle$\\

   $(3,1,1)$ &$1+2x$ &  $a\mapsto a^3$  & $\langle a,c\mid a^8=c^8=1, ca=a^3c^5, ca^3=ac^7\rangle$\\

   $(3,2,1)$ &$1+4x$ &  $a\mapsto a^3$  & $\langle a,c\mid a^8=c^8=1, ca=a^3c^5, ca^3=ac^5\rangle$\\

   $(3,3,1)$ &$1+6x$ &  $a\mapsto a^3$  & $\langle a,c\mid a^8=c^8=1,ca=a^3c^7, ca^3=ac^3\rangle$\\

   $(3,1,3)$ &$1+2{\scriptstyle\sum_{i=1}^x}3^{i-1}$ &  $a\mapsto a^3$  & $\langle a,c\mid a^8=c^8=1, ca=a^3c^3,ca^3=ac^3\rangle$\\

   $(3,3,3)$ &$1+6{\scriptstyle\sum_{i=1}^x}3^{i-1}$ &  $a\mapsto a^3$  & $\langle a,c\mid a^8=c^8=1, ca=a^3c^7, ca^3=ac^7\rangle$\\
\bottomrule
\end{tabular*}
\end{threeparttable}
\end{center}
\end{example}
\medskip

\section{Final comments and open problems}
Let $\varphi$ be a skew-morphism of a finite group $A$ and let $\pi: A \to \mathbb{Z}_m$ be its associated power function, where $m = |\varphi|$. It is well-known that if $A$ is a nontrivial group, then the subgroup $\Ker\pi$ is nontrivial~\cite[Theorem 4.3]{CJT2016}. Additionally, if $A$ is abelian, then $\varphi(\Ker\pi) = \Ker\pi$, that is, $\varphi$ preserves the kernel of $\pi$~\cite[Lemma 5.1]{CJT2007}.

Now, consider an extended power function $\Pi: A \to \mathbb{Z}_n$ of $\varphi$. As we have already pointed out, 
the kernel $\Ker\Pi$ and the core $\Core\Pi$ of $\Pi$ form subgroups of $A$; in particular $\Ker\Pi\leq\Ker\pi$ and $\Core\Pi\leq\Core\pi$. It has been shown that if $G=A\langle c\rangle$
is the corresponding cyclic complementary extension of $A$, then $\Ker\Pi=A\cap A^c$ and $\Core\Pi=\cap_{i=1}^nA^{c^i}$~\cite{HZ2023}, which are natural generalization of results on $\Ker\pi$ and $\Core\pi$ obtained earlier~\cite{CJT2016,HNWY2019}. However, it should be noted that the same arguments as those
used previously cannot be extended to establish that $\Ker\Pi$ is nontrivial when $A$ is nontrivial, or that $\varphi(\Ker\Pi) = \mathrm{Ker}(\Pi)$ when $A$ is abelian.

In the study of extended power functions $\Pi$ associated with a skew-morphism $\varphi$ of a finite group 
$A$, the primary objective should be to analyze the characteristics and properties of the subgroups $\Ker\Pi$ and $\Core\Pi$. This problem can be formalized as follows:
\begin{problem}
Investigate the properties of the subgroups $\Ker\Pi$ and $\Core\Pi$ within the framework of an extended power function $\Pi$ associated with a skew-morphism $\varphi$ of a finite group $A$.
\end{problem}

Upon reviewing the proof of Theorem~\ref{main-thm2}, it becomes evident that the average function $\Av$ and the mate function $\Lambda$ played pivotal roles in the process of determining extended power functions. This observation prompts an intriguing exploration of a more expansive theory and potential applications for these functions, along with their counterparts $\av$ and $\lambda,$ which are associated with the power function $\pi$.

\begin{problem}
Investigate the average functions $\av:A\to\mathbb{Z}_{m/p}$ and $\Av:A\to\mathbb{Z}_{n/m}$, 
and the mate functions $\lambda:A\to\mathbb{Z}_{m/p}$ and $\Lambda:A\to\mathbb{Z}_{n/m}$
associated with $\pi$ and $\Pi$, respectively.
\end{problem}

Recall that a skew-morphism $\varphi$ is referred to as smooth if $\pi(\varphi(x)) \equiv \pi(x) \pmod{m}$, 
for all $x \in A$. Notably, smooth skew-morphisms serve as significant generalizations of group automorphisms which are at the same time still relatively close to them~\cite{BJ2017, WHYZ2019}. In view of this, the 
concept of a skew-morphism being smooth with respect to its power function can be generalized to extended
power functions as follows: An extended power function $\Pi: A \to \mathbb{Z}_n$ associated with a skew-morphism $\varphi$ is termed \textit{smooth} if
\[
\Pi(\varphi(x))\equiv\Pi(x)\pmod{n},\quad\text{for all $x\in A$}.
\]
It is evident that if an extended power function $\Pi$ of a skew-morphism $\varphi$ is smooth, then
the skew-morphism itself is also smooth. However, the converse is not true. In view of this, it seems interesting
to investigate the following problem: 
\begin{problem}
Investigate extended power functions of smooth skew-morphisms.
\end{problem}

The classification problem concerning skew-morphisms of dihedral groups and their associated skew-product groups has been a longstanding challenge. This has, however, changed recently with the progress made in~\cite{HKK2023}. Expanding the theory developed in~\cite{HKK2023}, it is intriguing to further extend the obtained results and address the following classification problem aiming to provide a deeper understanding of groups that can be factored into a product of two disjoint subgroups, with one subgroup being dihedral and the other subgroup 
being cyclic.
\begin{problem}
Classify cyclic complementary extensions of dihedral groups.
\end{problem}
\bigskip

\begin{center}
{\bf Acknowledgement}
\end{center}
The first author is supported by National Natural Science Foundation of China (11801507)
and the Slovenian Research Agency (N1-0208). 
The second author is supported by VEGA Research Grant 1/0437/23 and APVV Research Grant 19-0308.


\end{document}